\definecolor{mediumspringgreen}{rgb}{0.0, 0.98039215, 0.60392156}
\def\visible<#1>{}  % beamer command not needed here
\newcommand\ifpdf
\DeclareMathOperator    \conv           {conv}
\DeclareMathOperator    \intr                   {int}
\DeclareMathOperator    \relint         {rel\,int}
\DeclareMathOperator    \verts          {vert}
\newcommand{\old}[1]{{}}
\newcommand{\bb}{\mathbb}
\newcommand{\R}{\bb R}
\newcommand{\Q}{\bb Q}
\newcommand{\Z}{\bb Z}
\newcommand{\N}{\bb N}
\newcommand\st{\mid}
\def\ve#1{\mathchoice{\mbox{\boldmath$\displaystyle\bf#1$}}
{\mbox{\boldmath$\textstyle\bf#1$}}
{\mbox{\boldmath$\scriptstyle\bf#1$}}
{\mbox{\boldmath$\scriptscriptstyle\bf#1$}}}
\newcommand{\setcond}[2]{\left\{ #1 \,\st\, #2 \right\}}
\renewcommand{\P}{\mathcal{P}}
\newcommand{\x}{{\ve x}}
\newcommand{\y}{{\ve y}}
\newcommand{\z}{{\ve z}}
\newcommand{\B}{B}
\def\st{\mid}
\newenvironment{psmallmatrixbig}{\bigl(\smallmatrix}{\endsmallmatrix\bigr)}
\newcommand\InlineFrac[2]{#1/#2}  % only works for simple arguments
\newcommand\ColVec[3][\relax]% Optional argument is denominator.
\let\frac=\InlineFrac\begin{psmallmatrixbig}#2\vphantom{/}\\#3\vphantom{/}\end{psmallmatrixbig}\egroup
\let\frac=\InlineFrac\begin{psmallmatrixbig}\ifx#200\else#2/#1\fi\\\ifx#300\else#3/#1\fi\end{psmallmatrixbig}\egroup
\newtheorem{theorem}{Theorem}[section]
\newcommand\MkNewTheorem[2]{%
  \newtheorem{#1}{#2}
  \expandafter\def\csname c@#1\endcsname{\c@theorem}
  \expandafter\def\csname p@#1\endcsname{\p@theorem}
  \expandafter\def\csname the#1\endcsname{\thetheorem}
  \expandafter\def\csname #1name\endcsname{#2}
}
\theoremstyle{definition}
\let\OurMathBbAux=\mathbb
\DeclareRobustCommand\OurMathBb{\OurMathBbAux}
\let\mathbb=\OurMathBb
\let\bfseries=\undefined
\DeclareRobustCommand\bfseries
\let\OurMathBbAux=\mathbf}
\def\@thm#1#2#3{%
  \ifhmode\unskip\unskip\par\fi
  \normalfont
  \trivlist
  \let\thmheadnl\relax
  \let\thm@swap\@gobble
  \thm@notefont{\fontseries\mddefault\upshape\unboldmath}%   %%% <--- Added \unboldmath
  \thm@headpunct{.}% add period after heading
  \thm@headsep 5\p@ plus\p@ minus\p@\relax
  \thm@space@setup
  #1% style overrides
  \@topsep \thm@preskip               % used by thm head
  \@topsepadd \thm@postskip           % used by \@endparenv
  \def\@tempa{#2}\ifx\@empty\@tempa
    \def\@tempa{\@oparg{\@begintheorem{#3}{}}[]}%
  \else
    \refstepcounter{#2}%
    \def\@tempa{\@oparg{\@begintheorem{#3}{\csname the#2\endcsname}}[]}%
  \fi
  \@tempa
}
\renewcommand{\pod}[1]%% a helper command used in \pmod.
{\allowbreak\mathchoice{\mkern18mu}{\mkern8mu}{\mkern8mu}{\mkern8mu}(#1)}
\let\epsilon=\varepsilon
\let\Myunderscore=\textunderscore   %%%% Better with textsf
  \def\Myunderscore{\textunderscore}%
\newcommand\underscore{\Myunderscore\allowbreak}
\DeclareRobustCommand\sage[1]{\textsf{\upshape #1}}
\DeclareRobustCommand\sagefunc[1]{\pgfkeys{/sagefunc/#1}}
\DeclareRobustCommand\sagefuncgraph[1]{\raisebox{-0.08ex}{\includegraphics[height=2ex,width=2.5em]{funcgraphs/#1}}}
\DeclareRobustCommand\sagefuncwithgraph[1]{\sagefunc{#1} \sagefuncgraph{#1}}
\DeclareRobustCommand\sagefuncwithgraphgomoryfractional{\sagefunc{gomory_fractional}\ \smash{\raisebox{-0.08ex}{\includegraphics[height=2.65ex,width=2.5em]{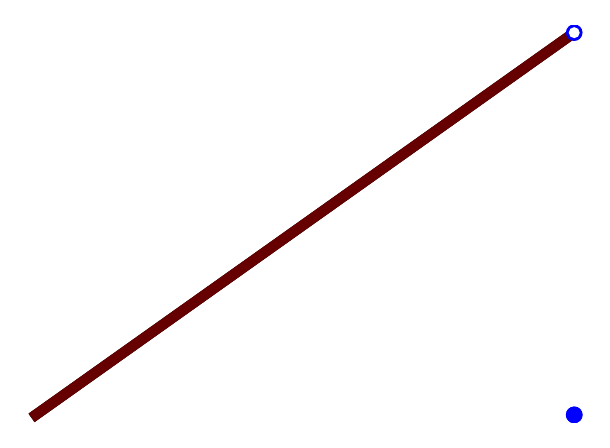}}}}
\title[Equivariant Perturbation VI. Two-Sided Discontinuous
Functions]{Equivariant Perturbation in \\Gomory and Johnson's Infinite Group 
  Problem.\\[1ex] VI. The Curious Case of Two-Sided Discontinuous Minimal Valid
  Functions}
\thanks{The authors gratefully acknowledge partial support from the National Science
  Foundation through grant DMS-1320051, awarded to M.~K\"oppe.}
\author{Matthias K\"oppe}
\address{Matthias K\"oppe: Dept.\ of Mathematics, University of California, Davis}
\email{mkoeppe@math.ucdavis.edu}
\author{Yuan Zhou} 
\address{Yuan Zhou: Dept.\ of Mathematics, University of Kentucky}
\email{yuan.zhou@uky.edu}
\date{$\relax$Revision: 2348 $ - \ $Date: 2018-01-28 20:35:05 -0800 (Sun, 28 Jan 2018) $ $\!\!\!}
\newcommand\Figure[2][\relax]{%
  \begin{figure}[h!]
    \includegraphics[width=.8\textwidth]{#2}
    \caption{\ifx#1\relax#2\else#1\fi}
  \end{figure}
}
\begin{document}
 \newcommand{\tgreen}[1]{\textsf{\textcolor {ForestGreen} {#1}}}
 \newcommand{\tred}[1]{\texttt{\textcolor {red} {#1}}}
 \newcommand{\tblue}[1]{\textcolor {blue} {#1}}

\begin{abstract}
  We construct a two-sided discontinuous piecewise linear minimal valid
  cut-generating function for the 1-row Gomory--Johnson model which is not extreme, but which
  is not a convex combination of other piecewise linear minimal valid
  functions.  %% This anomalous behavior results from combining features of
  %% Hildebrand's two-sided discontinuous extreme functions
  %% \sagefuncgraph{hildebrand_2_sided_discont_1_slope_1}
  %% \sagefuncgraph{hildebrand_2_sided_discont_2_slope_1} and
  %% Basu--Hildebrand--K\"{o}ppe's piecewise linear extreme function with
  %% irrational breakpoints, \sagefuncgraph{bhk_irrational}.
  %% %%It's not merely irrational, it's outright crazy! Enjoy.
  The new function only admits piecewise microperiodic perturbations.  We
  present an algorithm for verifying certificates of non-extremality in the
  form of such perturbations.
\end{abstract}
\maketitle

%\clearpage
%% {\footnotesize
%% \tableofcontents}

\section{Introduction}

\subsection{Continuous and discontinuous functions related to corner polyhedra}

Gomory and Johnson, in their seminal papers \cite{infinite,infinite2} titled
\emph{Some \textbf{continuous functions} related to corner polyhedra I, II},
introduced piecewise linear functions that are related to Gomory's group
relaxation \cite{gom} of integer linear optimization problems.  Let
$f\in (0,1)$ be a fixed number and consider the solutions in
non-negative integer variables $y_j$ to an equation (the \emph{group
  relaxation}) of the form
\begin{equation}
  \sum_{j = 1}^m r_j y_j \equiv f \pmod1,\label{eq:source-row-relaxation}
\end{equation}
where $r_j\in\R$ are given coefficients.
A \emph{cutting plane}, or \emph{valid inequality}, is a linear inequality
that is satisfied by all non-negative integer solutions~$y$.  A classical
method of deriving cutting planes, the \emph{Gomory mixed-integer cut}, can be
expressed as follows.  Consider the function $\pi = \sagefuncwithgraph{gmic}\footnote{A function name shown 
  in sans serif font is the name of the constructor of this function in the
  Electronic Compendium of Extreme Functions, part of the SageMath
  program~\cite{infinite-group-relaxation-code}.
  In an online copy of this paper, there are hyperlinks that lead to a search
  for this function in the GitHub repository.
  % } \sagefuncgraph{gmic}\footnote{
  After the name of a function, we show a
    sparkline (inline graph) of the function on the fundamental domain $[0,1]$ as a quick
    reference.}$
as the piecewise linear function that is the $\Z$-periodic extension of the
linear interpolation of $\pi(0) = 0$, $\pi(f) = 1$, and $\pi(1)=0$.  Then 
\begin{equation}
  \sum_{j=1}^m \pi(r_j) y_j \geq 1
\end{equation}
is a valid inequality.  What is remarkable is that the function $\pi$ only
depends on a single parameter (the right-hand side~$f$) and can be applied to
any equation of the form~\eqref{eq:source-row-relaxation}, no matter what the
given coefficients $r_j$ are; moreover, it provides the coefficients
$\pi(r_j)$ of the cutting plane independently of each other.  Nowadays,
following Conforti et al.~\cite{conforti2013cut}, we call functions~$\pi$ of
this type \emph{cut-generating functions}.  %% Gomory and Johnson initiated their
%% study.

Another classical cutting plane, the \emph{Gomory fractional cut}, can be
described using this pattern.  Its cut-generating function
$\pi = \sagefuncwithgraphgomoryfractional{}$, considered as a function from
the reals to the reals, is a sawtooth function,
discontinuous at all integers.  However, Gomory and Johnson considered these
functions as going from the interval $[0, 1)$% , taken as an additive
% group with addition taken modulo~$1$
%; and so the discontinuities were absorbed. 
, essentially removing the discontinuities from any further analysis.
Besides, in the hierarchy of cut-generating functions, arranged by increasing strength
from \emph{valid} over \emph{subadditive} and \emph{minimal} to \emph{extreme}
and \emph{facet}, the
\sagefunc{gomory_fractional} function belongs to the category of 
merely \emph{subadditive} functions.  As is well-known, it is dominated by the Gomory
mixed-integer cut, whose cut-generating function \sagefunc{gmic} is a
continuous extreme function.  This may explain Gomory and Johnson's focus
on the \emph{continuous functions} of their papers' titles.  In their papers,
they gave a full characterization of the minimal functions (they are the
$\Z$-periodic, subadditive functions $\pi\colon \R\to\R_+$ with $\pi(0)=0$ that
satisfy the \emph{symmetry condition} $\pi(x) + \pi(f - x) = 1$ for all
$x\in\R$).  Among the minimal functions, a function is extreme if it 
cannot be written as a convex combination of two other minimal functions.
Gomory and Johnson initiated a classification program for (continuous) extreme
functions, an early success of which was the two-slope theorem,
asserting that every continuous piecewise linear minimal function whose
derivatives take only two values is already extreme. This was a vast
generalization of the extremality of the Gomory mixed-integer cut. In parts of the later
literature, the related notion of facets, instead of extreme functions, was
considered; see \cite{koeppe-zhou:discontinuous-facets}.

Undisputably \textbf{discontinuous functions} came into play 30 years later, when
Letch\-ford--Lodi \cite{Letchford-Lodi-2002} introduced their \emph{strong
  fractional cut} (\sagefuncwithgraph{ll_strong_fractional}) as a strengthening
of the \sagefunc{gomory_fractional} cut.  (It neither dominates nor is
dominated by the \sagefunc{gmic} function.)
Letchford and Lodi first prove, by elementary means, that their new cutting plane gives a 
valid inequality; then they remark:
\begin{quote}
  \emph{[The] function mapping the coefficients of [the source row] onto the
    coefficients in the strong fractional cut [\dots]\@ can be shown to be
    subadditive. It also meets the other conditions of [Gomory--Johnson's
    characterization of minimal valid functions]. However, it differs from the
    subadditive functions given in [Gomory--Johnson's papers] in that it is
    discontinuous.}
\end{quote}

Further study of discontinuous functions took place in the context of
pointwise limits of continuous functions.  Dash and G\"unl\"uk~\cite{twoStepMIR}
introduced \emph{extended two-step MIR (mixed integer rounding) inequalities}
(\sagefuncwithgraph{dg_2_step_mir_limit}), whose corresponding cut-generating functions 
arise as limits of sequences of two-step MIR functions
(\sagefuncwithgraph{dg_2_step_mir}) defined in the same
paper.  They showed that these functions, up to \sagefunc{automorphism}, give
cutting planes that dominate the Letchford--Lodi strong fractional cuts.
Dey, Richard, Li, and Miller \cite{dey1} were the first to consider discontinuous
functions as first-class members of the Gomory--Johnson hierarchy of valid
functions, and introduced important tools for their study.  
They identified Dash and G\"unl\"uk's family
\sagefunc{dg_2_step_mir_limit} as extreme functions, 
introduced the enclosing family  \sagefuncwithgraph{drlm_2_slope_limit}, 
and defined another family of discontinuous functions,
\sagefuncwithgraph{drlm_3_slope_limit}.   
Later Richard, Li, and Miller
\cite{Richard-Li-Miller-2009:Approximate-Liftings} conducted a systematic
study of discontinuous functions via the connection to superadditive lifting
functions, which brought examples such as
\sagefuncwithgraph{rlm_dpl1_extreme_3a}.
%%%%%%%%Most recently, Basu, Conforti, Di Summa, and Paat \cite[Corollary~4.4]{basu2016structure}
%%% Quote: """Literature on valid tuples contains constructions of families of
%%% extreme valid tuples (π,α) such that π is discontinuous [7,8,13,15,17,18]
%%% (or continuous but not Lipschitz continuous [15]). Our result above shows
%%% that such functions may be disregarded, if one is interested in valid
%%% inequalities or facets of rational corner polyhedra."""

\subsection{The r\^ole of one-sided continuity in testing extremality}
Note that all of the above-mentioned families of extreme functions are
one-sided continuous at the origin, either from the left or from the right.
To explain the significance of this observation, we will outline the structure
of an extremality proof, using the notion of effective perturbations
introduced
in~\cite{hong-koeppe-zhou:software-abstract,hong-koeppe-zhou:software-paper}.
Let $\pi\colon \R \to \R_+$ be a minimal valid function.  Suppose
$\pi = \frac12(\pi^1+\pi^2)$ where $\pi^1$ and $\pi^2$ are valid functions;
then $\pi^1$ and $\pi^2$ are minimal.  Write
$\pi^1 = \pi + \epsilon \tilde\pi$ and $\pi^2 = \pi - \epsilon\tilde\pi$, where
$\tilde\pi\colon \R\to\R$ and $\epsilon>0$; then we call $\tilde\pi$ an
\emph{effective perturbation function}.  Towards the goal of showing
$\tilde\pi = 0$, one asks the following.
\begin{question}\label{q:properties-eff-perturb}
  Given a minimal function $\pi$, what properties does an effective
  perturbation $\tilde\pi$ necessarily have?
\end{question}
Later in this paper we will review answers to this question, which include the
boundedness of~$\tilde\pi$, the inheritance of additivity (see
\autoref{lemma:tight-implies-tight} below) and linearity properties (see
\autoref{thm:directly_covered} and \autoref{thm:indirectly_covered} below).
For now, we will focus on the following important regularity lemma by Dey,
Richard, Li, and Miller \cite{dey1}, which makes an assumption of one-sided
continuity at the origin.  It is a consequence of subadditivity of minimal
functions and serves as a crucial ingredient of many extremality proofs.
\begin{lemma}[{\cite[Theorem 2]{dey1}; see \cite[Lemma 2.11 (v)]{igp_survey}}]
  \label{survey-lemma-2.11(v)}
  Let $\pi \colon \R \to \R_+$ be a minimal valid function and $\tilde\pi$ an
  effective perturbation. %% , $\pi =
  %% \frac12(\pi^1+\pi^2)$, where 
  %% $\pi^1 = \pi + \bar\pi$ and $\pi^2 = \pi - \bar\pi$ are valid functions.
  If $\pi$ is piecewise linear and continuous from
  the right at $0$ or % continuous
  from the left at~$0$, 
  %% then the (minimal valid) functions $\pi^1$ and $\pi^2$ (and also the
  %% \emph{effective perturbation function} $\bar\pi$) are 
  then $\tilde\pi$ is continuous at all points at which $\pi$ is continuous.
\end{lemma}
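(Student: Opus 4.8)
\emph{Strategy.} Write $\pi=\tfrac12(\pi^1+\pi^2)$ with $\pi^1,\pi^2$ valid (hence, as noted in the excerpt, minimal and in particular nonnegative and subadditive), and $\tilde\pi=\tfrac1\epsilon(\pi^1-\pi)$. Since $\tilde\pi$ is a fixed linear combination of $\pi^1$ and $\pi$, it suffices to prove the stronger statement that $\pi^1$ (equivalently $\pi^2$) is continuous at every point $\bar x$ where $\pi$ is continuous. I would deduce this in three steps.

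\emph{Step 1: transfer one-sided continuity at the origin to $\pi^1$ and $\pi^2$.} From $\pi^1,\pi^2\ge 0$ and $\pi^1+\pi^2=2\pi$ one has $0\le\pi^i(h)\le 2\pi(h)$ for all $h$ and $i=1,2$. If $\pi$ is continuous from the right at~$0$, then, using $\pi(0)=0$ from minimality, $\pi(h)\to 0$ as $h\to 0^+$, and the squeeze gives $\pi^i(h)\to 0$ as $h\to 0^+$; that is, $\pi^1$ and $\pi^2$ are also continuous from the right at~$0$. The case of left-continuity at~$0$ is identical with $h\to 0^-$ (this is a purely local statement at the origin, so no periodicity is invoked). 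This is the only place the one-sided continuity hypothesis is used, and it is essential: without it $\pi^1$ could carry a jump at~$0$ that, by subadditivity, would contaminate every point.

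\emph{Steps 2--3: propagate to $\bar x$ by subadditivity and then close the loop.} Subadditivity of $\pi^1$ gives $\pi^1(\bar x+h)\le\pi^1(\bar x)+\pi^1(h)$ and $\pi^1(\bar x)\le\pi^1(\bar x-h)+\pi^1(h)$ for all~$h$; letting $h\to 0^+$ and using Step~1 ($\pi^1(h)\to 0$) yields $\limsup_{x\to\bar x^+}\pi^1(x)\le\pi^1(\bar x)$ and $\liminf_{x\to\bar x^-}\pi^1(x)\ge\pi^1(\bar x)$, and likewise for $\pi^2$ (in the left-continuous case the two sides are swapped). Now use that $\pi$ is continuous at $\bar x$: since $\pi^2=2\pi-\pi^1$ and $2\pi(x)\to 2\pi(\bar x)$, the bound on $\pi^1$ becomes $\liminf_{x\to\bar x^+}\pi^2(x)=2\pi(\bar x)-\limsup_{x\to\bar x^+}\pi^1(x)\ge 2\pi(\bar x)-\pi^1(\bar x)=\pi^2(\bar x)$, which together with $\limsup_{x\to\bar x^+}\pi^2(x)\le\pi^2(\bar x)$ forces $\lim_{x\to\bar x^+}\pi^2(x)=\pi^2(\bar x)$; the left-sided limit is handled the same way, using the $\liminf_{x\to\bar x^-}\pi^1(x)\ge\pi^1(\bar x)$ bound. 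Hence $\pi^2$, and so $\pi^1=2\pi-\pi^2$, is continuous at $\bar x$, whence $\tilde\pi$ is continuous at~$\bar x$.

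\emph{Main obstacle.} The conceptual crux is Step~1 --- the observation that nonnegativity of \emph{both} $\pi^1$ and $\pi^2$ forces each of them to inherit the one-sided continuity of their average at the origin. Once that is in hand, Steps~2--3 are routine manipulations of subadditivity inequalities together with elementary $\limsup$/$\liminf$ arithmetic, and piecewise linearity of $\pi$ plays no real role beyond guaranteeing (via its finitely many breakpoints) that continuity points of $\pi$ exist; the argument as written only uses continuity of $\pi$ at the point $\bar x$ under consideration.
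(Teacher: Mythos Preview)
Your argument is correct. The paper does not give its own proof of this lemma; it is stated with citations to \cite{dey1} and \cite{igp_survey}, and the paper merely remarks (in the paragraph following the statement of \autoref{rk:perturbation-lim-exist-from-2d-face}) that a short proof of an even stronger Lipschitz version appears in \cite[Lemma~6.4]{hong-koeppe-zhou:software-paper}. Your three-step approach---squeeze at the origin, propagate one-sided semicontinuity by subadditivity, then close using $\pi^1+\pi^2=2\pi$ and continuity of~$\pi$ at~$\bar x$---is essentially the standard proof and matches in spirit what one finds in the cited sources. One small comment: your final remark that piecewise linearity ``plays no real role'' is accurate for the continuity conclusion you prove, but the stronger Lipschitz version referenced by the paper does use the piecewise linear structure (to get a uniform subadditivity slack near the origin), which is why the hypothesis is retained in the statement.
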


Hildebrand (2013, unpublished; see \cite{igp_survey}),
constructed the first examples of extreme functions that
are \textbf{two-sided discontinuous} at the origin, \sagefuncwithgraph{hildebrand_2_sided_discont_1_slope_1},
\sagefuncwithgraph{hildebrand_2_sided_discont_2_slope_1}. Their extremality
proofs do not depend on \autoref{survey-lemma-2.11(v)}.  
Later, Zhou \cite{zhou:dissertation} constructed the first example,
\sagefuncwithgraph{zhou_two_sided_discontinuous_cannot_assume_any_continuity}
(\autoref{fig:two_sided_discontinuous_cannot_assume_any_continuity}), that 
demonstrates that the 
hypothesis of one-sided continuity at the origin cannot be
removed from \autoref{survey-lemma-2.11(v)}.  
%%%% from thesis:
%%%%  defined by
%%%% \[
%%%%  \pi(x) = 
%%%%   \begin{cases} 
%%%%    0 & \text{if } x=0\\
%%%%    \tfrac23x + \tfrac23 &\text{if } 0< x< \tfrac18\\
%%%%    2x  &\text{if } \tfrac18 \leq x \leq \tfrac38\\
%%%%    \tfrac23x &\text{if } \tfrac14< x< \tfrac13 \text{ or } \tfrac13< x< \tfrac23\\
%%%%    1       & \text{if } x=\tfrac12,
%%%%   \end{cases}
%%%% \]

\begin{figure}[t]
%%%%%%%%%% Comment from survey: I decided that this figure, after all, is too distracting; it's
%%%%%%%%%% also a bit too complicated to explain.
\begin{center}
\includegraphics[width=.44\linewidth]{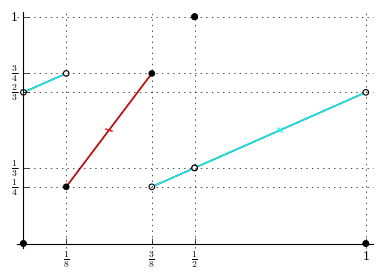}\quad
\includegraphics[width=.44\linewidth]{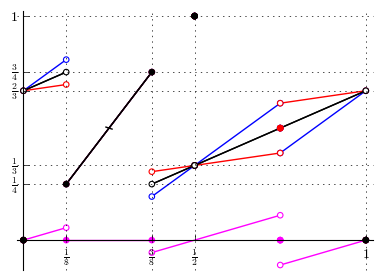}
\end{center}
\caption{This function, \sage{$\pi$ = \sagefunc{zhou_two_sided_discontinuous_cannot_assume_any_continuity}}, is
  minimal, but not extreme% (and  hence also not a facet)
  , as proved by
  \sage{\sagefunc{extremality_test}($\pi$, show\underscore{}plots=True)}.
  The procedure first shows that
  for any distinct minimal $\pi^1 = \pi + \bar\pi$ (\emph{blue}), $\pi^2 = \pi
  - \bar\pi$ (\emph{red}) such that $\pi = \tfrac{1}{2}\pi^1
  + \tfrac{1}{2} \pi^2$, the functions $\pi^1$ and $\pi^2$ are
  piecewise linear with the same breakpoints as $\pi$ and possible additional
  breakpoints at~$\frac14$ and $\frac34$.   The open intervals between these
  breakpoints are covered (see \autoref{s:preliminaries}, after
  \autoref{thm:directly_covered}, for this notion). %  (in the terminology of
  % \cite{basu-hildebrand-koeppe:equivariant}, $\pi$ is \emph{affine imposing}
  % on all open intervals between these breakpoints
  %).  
  %% \tred{(TODO: Use plotting code from branch
  %% plot\_covered\_intervals\_with\_markers to mark the extra breakpoint.)}
  A finite-dimensional extremality
  test then finds exactly one linearly independent perturbation $\bar\pi$
  (\emph{magenta}), as shown. Thus all nontrivial perturbations are discontinuous at
  $\frac{3}{4}$, a point where $\pi$ is continuous.
}
\label{fig:two_sided_discontinuous_cannot_assume_any_continuity}
\end{figure}
\smallbreak

The breakdown of the regularity lemma in the two-sided discontinuous case
poses a challenge for the algorithmic theory of extreme functions.  Consider
the following variant of \autoref{q:properties-eff-perturb}:
\begin{question}\label{q:certify-nonextremality}
  \textup{(a)}~What class of effective perturbations $\tilde\pi$ is sufficient to
  certify the non-extremality of all non-extreme piecewise linear functions?
  \textup{(b)}~In particular, do piecewise linear effective perturbations $\tilde\pi$
  suffice?
\end{question}
%%%%%%The above \autoref{survey-lemma-2.11(v)} already gave a partial answer.  
% Building upon this,  <---- (actually doesn't use this lemma in grid case for
% this result, only uses continuous version for the oversampling result)
For the case of piecewise linear functions $\pi$ with rational breakpoints
from $\frac1q\Z$, Basu et al.\@ \cite{basu-hildebrand-koeppe:equivariant},
in the first paper in
the present series then showed that if a nonzero effective perturbation exists, there also exists
one that is piecewise linear with rational breakpoints in $\frac1{4q}\Z$.
This 
% Basu et al.\@ \cite{basu-hildebrand-koeppe:equivariant}, in the first paper in
% the present series,
implied the first algorithm for testing
the extremality of a (possibly discontinuous) piecewise linear minimal valid
function with rational breakpoints.  % The algorithm works on the grid
% $\frac1q\Z$, where $q$ is the least common multiple of all breakpoint
% denominators.
In the same paper, however, Basu et al.\
\cite{basu-hildebrand-koeppe:equivariant} also introduced
a family \sagefuncwithgraph{bhk_irrational} of continuous piecewise linear
minimal valid functions with irrational breakpoints, 
to which the algorithm does not apply.  
Its extremality proof uses an arithmetic argument that depends on the
$\Q$-linear independence of certain parameters of the function, 
and also relies on \autoref{survey-lemma-2.11(v)}. 
%% Actually a stronger
%% version, 2.11(iv), is used!
%% It has been a long-term project to
%% develop a complete, ``grid-free'' algorithmic theory of
%% piecewise linear extreme functions that includes functions with irrational
%% breakpoints such as \sagefunc{bhk_irrational}.  A preliminary implementation
%% is available as part of \cite{infinite-group-relaxation-code}; see also
%% \cite{hong-koeppe-zhou:software-abstract,hong-koeppe-zhou:software-paper}. 

\begin{figure}[t]
\centering
\includegraphics[width=\linewidth]{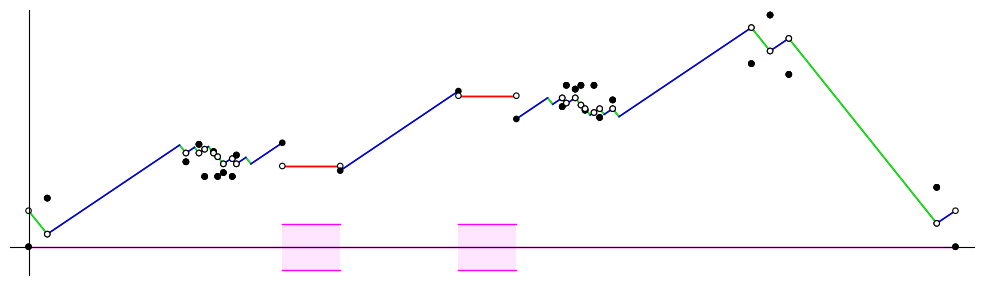}
\caption{This function,
  \sage{$\pi$ = kzh\_minimal\_has\_only\_crazy\_perturbation\_1}, 
  % is the star of the present paper.  
  % It
  has three slopes (\emph{blue}, \emph{green}, \emph{red}) and is
  discontinuous on both sides of the origin. 
  It is a non-extreme minimal valid function, 
  but in order to demonstrate non-extremality, one needs to use a highly
  discontinuous (locally microperiodic) perturbation.
  We construct a simple explicit example perturbation $\epsilon\bar\pi$
  (\emph{magenta}); see \autoref{th:kzh_minimal_has_only_crazy_perturbation_1}.
  It takes three values, $\epsilon$, $0$, and $-\epsilon$ 
  (\emph{horizontal magenta line segments}) where $\epsilon=0.0003$; 
  in the figure it has been rescaled to amplitude~$\frac1{10}$.
}
\label{fig:has_crazy_perturbation}
\end{figure}
\smallbreak 

\clearpage
\subsection{Contributions of this paper}
In the present paper, we show that the breakdown of the regularity lemma
(\autoref{survey-lemma-2.11(v)}) does not just pose a technical difficulty;
rather, two-sided discontinuous functions take an exceptional place in
the theory of the Gomory--Johnson functions.
We construct a two-sided discontinuous piecewise linear
minimal valid function~$\pi$ with remarkable properties;
cf.~\autoref{fig:has_crazy_perturbation}.  It follows the basic
blueprint of the \sagefuncwithgraph{bhk_irrational} function that we mentioned
above, but introduces a number of
discontinuities and new breakpoints. 
%The function~$\pi$ has some irrational breakpoints and a behavior that ``goes beyond irrational'': 
Our function $\pi =
\sagefuncwithgraph{kzh_minimal_has_only_crazy_perturbation_1}$
is not extreme, but it is impossible to write it as the
convex combination of \emph{piecewise linear} (or, more generally,
\emph{piecewise continuous}) minimal valid functions.
%%   In the
%% language of perturbation functions introduced in
%% \cite{basu-hildebrand-koeppe:equivariant} (see also \cite{igp_survey}), 
All effective perturbations~$\tilde\pi$ of~$\pi$ are non--piecewise linear,
highly discontinuous, ``locally microperiodic'' functions. 
Thus, we give a negative answer to \autoref{q:certify-nonextremality}\,(b):
\textbf{Piecewise linear effective perturbations do not suffice to certify
  nonextremality of piecewise linear functions.}
(Moreover, in the authors' IPCO 2017 paper
\cite{koeppe-zhou:discontinuous-facets}, building upon the present paper, the
function~$\pi$ and a certain perturbation of it are instrumental in separating
the classes of extreme functions, facets, and so-called \emph{weak facets}, thereby solving the
long-standing open question \cite[Open question 2.9]{igp_survey} regarding the
relations of these notions.) 

In this way, our paper contributes to the foundations of the cutting-plane theory of
integer programming by investigating the fine structure of a space of
cut-generating functions.  In this regard, the paper is in a line of
recent papers on the Gomory--Johnson model: the MPA 2012 paper \cite{bccz08222222}, in which the
first non--piecewise linear, measurable extreme function with 2 slopes was
discovered; and the IPCO 2016 paper \cite{bcdsp:arbitrary-slopes}, in which a
measurable extreme function with an infinite number of slopes was 
constructed using techniques similar to those~in~\cite{bccz08222222}.  

However, our paper not only constructs an
example, but also develops the % general
theory of effective perturbations of
minimal valid functions further.  Local continuity of perturbations % as a
% consequence of subadditivity
has been observed and used in the original
Gomory--Johnson papers \cite{infinite,infinite2}. The extension to the one-sided
discontinuous case (\autoref{survey-lemma-2.11(v)}) 
was found by Dey, Richard, Li, and Miller \cite{dey1}.  
We prove a 
new analytical tool. Our \autoref{rk:perturbation-lim-exist-from-2d-face} establishes
the \textbf{existence of one-sided or two-sided limits of effective perturbation functions} $\tilde\pi$
at certain points, providing a weak counterpart of
\autoref{survey-lemma-2.11(v)} without the assumption of one-sided continuity of~$\pi$.
We consider our \autoref{rk:perturbation-lim-exist-from-2d-face} to be a major advance, 
requiring a much more subtle proof.  %% We include a short proof of
%% \autoref{survey-lemma-2.11(v)} in Appendix~\ref{s:omitted-proofs-sec2} to
%% facilitate comparing these results.
(For comparison, see the short proof of a stronger version of \autoref{survey-lemma-2.11(v)}, establishing Lipschitz
continuity of $\tilde\pi$ on the intervals of continuity of~$\pi$, in
\cite[Lemma 6.4]{hong-koeppe-zhou:software-paper}.)
We leave as an open question how to generalize
\autoref{rk:perturbation-lim-exist-from-2d-face} to the multi-row case~\cite{bhk-IPCOext}, i.e., functions
$\pi\colon \R^k\to\R$.

%%% and in a parallel stream of research in the
%%% functional equations literature.  CHECK!!!

\medbreak

Our paper is structured as follows. In \autoref{s:preliminaries}, we introduce % the
% relevant
notation, definitions, and basic results from the literature. %Then,
In \autoref{s:existence-of-limits}, we prove our result on the existence
of limits.
In \autoref{s:algorithm_for_restricted_class_of_crazy_perturbtions}, in order
to demonstrate the nonextremality of the new
function~$\pi$, we develop an \textbf{algorithm to 
verify a certificate of nonextremality} in the form of a given locally quasi\-microperiodic
function $\bar\pi$ of a restricted class.  %% It computes 
%% $\epsilon>0$ so that $\pi\pm\epsilon\bar\pi$ is a minimal valid function.  
In \autoref{s:proof-th:kzh_minimal_has_only_crazy_perturbation_1}, 
% we give a definition of the function $\pi$ and of an effective perturbation
% function~$\bar\pi$.  We then
we define the example functions $\pi$ and~$\bar\pi$ and
prove that they have the claimed properties. 
The complexity of this proof is much greater than that of functions from
the extreme functions literature; because of this, 
our proof is computer-assisted. %%  A fully automatic version of the
%% proof is also available.

%% Moreover, our paper is innovative in its use of computer-assisted proofs in
%% the domain of cutting planes.  By providing a new tool in
%% \autoref{s:algorithm_for_restricted_class_of_crazy_perturbtions}, we extend
%% the reach of algorithms for cut-generating functions, such as the minimality
%% test for discontinuous functions
%% from~\cite{Richard-Li-Miller-2009:Approximate-Liftings} and the computation of
%% covered components from \cite{basu-hildebrand-koeppe:equivariant}, with their
%% grid-free implementation in
%% \cite{hong-koeppe-zhou:software-abstract,infinite-group-relaxation-code}.
%% The combination of these computational tools allows us to give a
%% computer-assisted proof for our example, whose complexity far exceeds that of
%% human-written proofs in the cut-generating functions literature.

\section{Preliminaries}
\label{s:preliminaries}

We begin by giving a definition of $\Z$-periodic piecewise linear functions~$\pi\colon \R\to\R$ that are allowed to be discontinuous, following \cite[section 2.1]{basu-hildebrand-koeppe:equivariant} and the recent survey~\cite{igp_survey,igp_survey_part_2}.
  
Let $0 =x_0 < x_1< \dots < x_{n-1} < x_n = 1$.
Denote by
\begin{math}
    \B = \{\, x_0 + t, x_1 + t, \dots, x_{n-1}+t\st
    t\in\Z\,\}
  \end{math}
the set of all breakpoints.
The 0-dimensional faces are defined to be the 
singletons, $\{ x \}$, $x\in B$,
and the 1-dimensional faces are the closed intervals,
$ [x_i+t, x_{i+1}+t]$, $i=0, \dots, {n-1}$, $t\in\Z$. 
The empty face, the 0-dimensional and the 1-dimensional faces form $\P =
\P_{\B}$, a locally finite % one-dimensional 
polyhedral
complex, % that is
periodic modulo~$\Z$.
%  
%\begin{definition}
We call a function $\pi\colon \R \to \R$ %periodic modulo $\Z$ 
\emph{piecewise linear} over $\mathcal{P}_B$ if for each face $I \in \mathcal{P}_B$, there is an affine linear function $\pi_I \colon \R \to \R$, $\pi_I(x) = c_I x + d_I$  such that $\pi(x) = \pi_I(x)$ for all $x \in\relint(I)$.
%\end{definition}
 Under this definition, piecewise linear functions can be discontinuous% , which is crucial for the present paper
 .
Let $I = [a, b] \in \mathcal{P}_B$ be a 1-dimensional face. The function $\pi$ can be determined on % the open intervals
$\intr(I) = (a, b)$ by linear
  interpolation of the limits $\pi(a^+)=\lim_{x\to a, x>a} \pi(x)
  = \pi_I(a)$ and $\pi(b^-)=\lim_{x\to b, x<b} \pi(x) = \pi_I(b)$. 
 %We say the function $\pi$ is continuous piecewise linear over $\mathcal{P}_B$ if it is affine over each of the cells of $\mathcal{P}_B$ (thus automatically imposing continuity).
%
Likewise, we call a function $\pi\colon \R \to \R$ \emph{piecewise continuous} over $\mathcal{P}_B$ if it is continuous over % the relative interior
$\relint(I)$ for each face $I \in \mathcal{P}_B$. 

The \emph{minimal valid functions} in the classic 1-row Gomory--Johnson
\cite{infinite,infinite2} model are classified. They are the $\Z$-periodic,
subadditive functions $\pi\colon \R\to[0,1]$ with $\pi(0)=0$, $\pi(f)=1$, that
satisfy the \emph{symmetry condition} $\pi(x) + \pi(f - x) = 1$ for all
$x\in\R$.  Here $f$ is the fixed number from \eqref{eq:source-row-relaxation}.
Following
\cite{basu-hildebrand-koeppe:equivariant,igp_survey,igp_survey_part_2}, we
introduce the function 
\[\Delta\pi \colon \R \times \R \to \R,\quad \Delta\pi(x,y) =
  \pi(x)+\pi(y)-\pi(x+y),\] which measures the
slack in the subadditivity condition.
%\footnote{It is available in the code as \sage{delta\_pi($\pi$, $x$, $y$)}; in \cite{infinite}, it was called $\nabla(x,y)$.}  
%Thus, if $\Delta\pi(x,y)<0$, subadditivity is violated at $(x, y)$; 
%if $\Delta\pi(x,y)=0$, additivity holds at $(x,y)$; 
%and if $\Delta\pi(x,y)>0$, we have strict subadditivity at $(x,y)$.
% It should be clear that
If $\pi(x)$ is piecewise linear, then this 
induces the piecewise linearity of $\Delta\pi(x,y)$.  To express the domains of
linearity of $\Delta\pi(x,y)$, and thus domains of additivity and strict
subadditivity, we introduce the two-dimensional polyhedral complex
$\Delta\P = \Delta\P_\B$. 
The faces $F$ of the complex are defined as follows. Let $I, J, K \in
\P_{\B}$, so each of $I, J, K$ is either the empty set, a breakpoint of $\pi$, or a closed
interval delimited by two consecutive breakpoints. Then 
$$ F = F(I,J,K) = \setcond{\,(x,y) \in \R \times \R}{x \in I,\, y \in J,\, x + y \in
  K\,}.$$ 
The projections  $p_1, p_2, p_3 \colon \R \times \R \to \R$ are defined as 
$p_1(x,y)=x$,  $p_2(x,y)=y$, $p_3(x,y) = x+y$.

Let $F \in \Delta\P$ and let $(u,v) \in F$. Observing that $\Delta\pi|_{\relint(F)}$ is affine, we define 
\begin{equation}\Delta\pi_F(u,v) = \lim_{\substack{(x,y) \to (u,v)\\ (x,y) \in \relint(F)}}
  \Delta\pi(x, y),\label{eq:Delta-pi-F}
\end{equation}
which allows us to conveniently express limits to boundary points of $F$, in particular to vertices of $F$, along paths within $\relint(F)$.
It is clear that $\Delta\pi_F(u,v) $ is affine over $F$, and $\Delta\pi(u,v)=\Delta\pi_F(u,v)$ for all $(u,v) \in \relint(F)$.
We will use $\verts(F)$ to denote the set of vertices of the face~$F$.  

Let $\pi$ be a piecewise linear minimal valid function. We now define the
\emph{additive faces} of the two-dimensional polyhedral complex $\Delta\P$ of
$\pi$. When $\pi$ is continuous, we say that a face $F \in \Delta\P$
is additive if $\Delta\pi =0$ over all $F$. Note that $\Delta\pi$ is affine
over $F$, so the condition is equivalent to $\Delta\pi(u, v) = 0$ for any $(u, v)
\in \verts(F)$. When $\pi$ is discontinuous, following
\cite{hong-koeppe-zhou:software-abstract,hong-koeppe-zhou:software-paper}, we say that a face $F \in
\Delta\P$  is additive if $F$ is contained in a face $F' \in \Delta\P$ such
that $\Delta\pi_{F'}(x,y) =0$ for any $(x,y) \in 
F$.
Since $\Delta\pi$ is affine in the relative interiors of each face of $\Delta\P$, the last condition is equivalent to $\Delta\pi_{F'}(u,v) =0$ for any $(u,v) \in \verts(F)$.

\smallbreak

A minimal valid function $\pi$ is said to be \emph{extreme} if it cannot be written as a
convex combination of two other minimal valid functions.
We say that a function~$\tilde\pi$ is an \emph{effective perturbation function} for
the minimal valid function~$\pi$, denoted $\tilde\pi \in \tilde\Pi^{\pi}(\R,\Z)$, 
if there exists $\epsilon>0$ such that $\pi\pm\epsilon\tilde\pi$ are minimal
valid functions.  %% (See Appendix~\ref{s:omitted-proofs-sec2} for the relation
%% between the space $\tilde\Pi^{\pi}(\R,\Z)$ and the space $\bar{\Pi}^E(\R,\Z)$
%% of perturbation functions from earlier literature.)
Thus, a minimal valid function $\pi$ is extreme if and only if no non-zero effective
perturbation $\tilde{\pi} \in \tilde{\Pi}^{\pi}(\R,\Z)$ exists. 

The key technique towards answering \autoref{q:properties-eff-perturb} is to
analyze the additivity relations.  The starting point is the
following lemma, which shows that all subadditivity conditions that are tight
(satisfied with equality) for $\pi$ are also tight for an effective
perturbation $\tilde{\pi}$.  This includes additivity in the limit, which we
express using the notation $\Delta\tilde\pi_F$, defined as a limit as in
\eqref{eq:Delta-pi-F}, though $\tilde\pi$ is not assumed to be piecewise
linear. 
\begin{lemma}[{\cite[Lemma 2.7]{basu-hildebrand-koeppe:equivariant}; see \cite[Lemma 6.1]{hong-koeppe-zhou:software-paper}}]
\label{lemma:tight-implies-tight}
Let $\pi$ be a minimal valid function that is piecewise linear over $\P$. Let $F$ be a face of $\Delta\P$ and let $(u, v) \in F$. If $\Delta\pi_F(u,v)=0$, then $\Delta\tilde{\pi}_F(u,v)=0$ for any effective perturbation function $\tilde\pi \in \tilde\Pi^{\pi}(\R,\Z)$.
\end{lemma}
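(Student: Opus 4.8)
The plan is to exploit the subadditivity of the perturbed functions $\pi\pm\epsilon\tilde\pi$ to sandwich $\Delta\tilde\pi$ between $-\frac1\epsilon\Delta\pi$ and $\frac1\epsilon\Delta\pi$, and then to pass to the limit within $\relint(F)$.

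First I would fix $\epsilon>0$ such that $\pi^{\pm}:=\pi\pm\epsilon\tilde\pi$ are minimal valid functions, which exists by the definition of an effective perturbation. Since minimal valid functions are subadditive, $\Delta\pi^{\pm}(x,y)\ge 0$ for all $(x,y)\in\R\times\R$. The operator $\Delta$ is linear in its function argument, so $\Delta\pi^{\pm}(x,y)=\Delta\pi(x,y)\pm\epsilon\,\Delta\tilde\pi(x,y)$; adding and subtracting the two nonnegativity inequalities yields the pointwise bound
\[
 |\Delta\tilde\pi(x,y)|\ \le\ \tfrac1\epsilon\,\Delta\pi(x,y)\qquad\text{for all }(x,y)\in\R\times\R.
\]
In particular $\Delta\tilde\pi$ vanishes wherever $\Delta\pi$ does.

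Next, let $(u,v)\in F$ with $\Delta\pi_F(u,v)=0$, and let $(x_k,y_k)$ be an arbitrary sequence in $\relint(F)$ converging to $(u,v)$. Because $\pi$ is piecewise linear over $\P$, $\Delta\pi$ is affine on $\relint(F)$, so by the definition \eqref{eq:Delta-pi-F} we have $\Delta\pi(x_k,y_k)\to\Delta\pi_F(u,v)=0$. The sandwich bound then forces $\Delta\tilde\pi(x_k,y_k)\to 0$ as well. Since this holds along every such sequence, the limit in \eqref{eq:Delta-pi-F} defining $\Delta\tilde\pi_F(u,v)$ exists and equals $0$, as claimed.

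The argument is short and presents no real obstacle; the only point that deserves attention is that the subadditivity inequalities for $\pi^{\pm}$ are available at every point of $\R\times\R$ (this is exactly what subadditivity asserts, independent of any piecewise-linear structure on $\tilde\pi$), and that the same sandwich bound simultaneously guarantees the \emph{existence} of the one-sided or two-sided limit $\Delta\tilde\pi_F(u,v)$, which is needed since $\tilde\pi$ is not assumed to be piecewise linear.
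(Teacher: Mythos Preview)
Your proof is correct. The paper does not give its own proof of this lemma but merely cites it from \cite[Lemma 2.7]{basu-hildebrand-koeppe:equivariant} and \cite[Lemma 6.1]{hong-koeppe-zhou:software-paper}; your sandwich argument $|\Delta\tilde\pi|\le\frac1\epsilon\Delta\pi$ followed by passage to the limit along $\relint(F)$ is exactly the standard argument used in those references, and indeed the paper itself invokes the same inequality verbatim later (in the proof of \autoref{thm:crazy}, Case~2 of the $\Rightarrow$ direction).
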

%% \begin{proof}
%% Let $\epsilon>0$ such that $\pi^+ = \pi+\epsilon\tilde{\pi}$ and $\pi^-=\pi-\epsilon\tilde{\pi}$ are minimal valid functions.
%% Since $\pi^+$ and $\pi^-$ are subadditive, $\epsilon \left| \Delta\tilde\pi_F(u, v)\right| \leq \Delta\pi_F(u,v) = 0$.  Thus we have $\Delta\tilde\pi_F(u,v)=0$. 
%% \end{proof}

%% We study the additive faces $F$ of $\Delta\P$. They imply, among other things, the important covering (affine imposing in the terminology of \cite{basu-hildebrand-koeppe:equivariant}) property that we outline here. We refer to the reader to~\cite[Section 4]{igp_survey} for details on the Interval Lemma and its generalizations.

%% The following theorem, a consequence of the celebrated Gomory--Johnson Interval Lemma, reveals the covering property of two-dimensional additive faces $F$ of $\Delta\mathcal{P}$. 
We first make use of the additivity relations that are captured by the
two-dimensional additive faces $F$ of $\Delta\P$.
The following, a corollary of the convex additivity domain lemma \cite[Theorem
4.3]{igp_survey}, appears as \cite[Theorem 6.2]{hong-koeppe-zhou:software-paper}.
\begin{theorem}%[{\cite[Theorem 4.3]{igp_survey}}]
\label{thm:directly_covered}
Let $\pi$ be a minimal valid
function that is piecewise linear over $\P$. Let $F$ be a two-dimensional additive face of $\Delta\P$.  Let $\theta=\pi$ or $\theta =\tilde{\pi} \in \tilde{\Pi}^{\pi}(\R,\Z)$.
%Let $\tilde{\pi} \in \tilde{\Pi}^{\pi}(\R,\Z)$ be an effective perturbation for $\pi$. 
Then $\theta$ is affine with the same slope over $\intr(p_1(F))$, $\intr(p_2(F))$, and $\intr(p_3(F))$.\footnote{If the function $\pi$ is continuous, then %the affine properties extend to the boundary as well,
$\theta$ is affine with the same slope over the closed intervals $p_1(F)$, $p_2(F)$, and $p_3(F)$, 
by \cite[Corollary 4.9]{igp_survey}.}
\end{theorem}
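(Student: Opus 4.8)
The plan is to derive the statement from the convex additivity domain lemma \cite[Theorem 4.3]{igp_survey}, applied to the single two-dimensional face $F$. First I would dispose of the geometry. Writing $F=F(I,J,K)$, the hypothesis $\dim F=2$ forces $I,J,K$ to be full-dimensional intervals, $F$ to be a full-dimensional convex polygon in $\R^2$ with $\relint F$ open and connected, and each projection to satisfy $\intr(p_i(F))=p_i(\relint F)$, a nonempty open interval. Next I would record that $\Delta\theta$ vanishes on $\relint F$ off a negligible set. For $\theta=\pi$ this is immediate: since $\Delta\P$ is two-dimensional, the face $F$ containing a two-dimensional face equals that face, so additivity of $F$ means $\Delta\pi_F\equiv 0$ on $F$, and $\Delta\pi=\Delta\pi_F$ on $\relint F$. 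For $\theta=\tilde\pi$, \autoref{lemma:tight-implies-tight} then gives $\Delta\tilde\pi_F\equiv 0$ on $F$; unwinding \eqref{eq:Delta-pi-F}, this says $\Delta\tilde\pi$ has limit $0$ at every point of the open set $\relint F$. A function with limit $0$ at every point of an open set vanishes there outside a countable set, since each superlevel set $\{\,|\Delta\tilde\pi|\ge\varepsilon\,\}$ can have no accumulation point in $\relint F$ and hence is countable. So in either case $\theta(x)+\theta(y)-\theta(x+y)=0$ on $\relint F$ off a countable set $N$.

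The analytic core is the convex additivity domain lemma itself. Fix an open box $B\subseteq\relint F$ centered at a point whose first coordinate lies in $\intr(p_1(F))$. Subtracting the functional equation $\theta(x+y)=\theta(x)+\theta(y)$ at $(x_1,y)$ and $(x_2,y)$ over the co-countably many slices that avoid $N$ shows that the increment $u\mapsto\theta(u+s)-\theta(u)$ is independent of $u$ off a countable set; the resulting function $\kappa(s)$ is additive for small $s$. The crucial point is boundedness of $\theta$: $\pi$ maps into $[0,1]$, and $\varepsilon\tilde\pi=(\pi+\varepsilon\tilde\pi)-\pi$ lies in $[-1,1]$, so $|\tilde\pi|\le 1/\varepsilon$. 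Boundedness of $\kappa$ near $0$ upgrades the merely Cauchy-additive $\kappa$ to a linear one, $\kappa(s)=cs$, and forces $\theta$ to agree with an affine function on the first-coordinate interval of $B$; the symmetric arguments (subtracting in $y$, and in $x+y$) handle the other two coordinates. This step is exactly where \cite[Theorem 4.3]{igp_survey} does its work.

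To globalize, I would cover the connected set $\relint F$ by such boxes; on overlapping boxes the affine pieces of $\theta$ must coincide, so the slope is a single constant along each of $\intr(p_1(F))$, $\intr(p_2(F))$, $\intr(p_3(F))$, and $\theta$ is globally affine on each. Finally, on any one box all three restrictions of $\theta$ are now affine, hence $\Delta\theta$ is affine and continuous there; since its limit along $\relint F$ is $0$, it vanishes identically on the box, and comparing the linear coefficients in $\theta(x)+\theta(y)=\theta(x+y)$ forces the three slopes to be equal and the constant terms to satisfy $d_3=d_1+d_2$. This is the specialization of the convex additivity domain lemma to a single two-dimensional face recorded as \cite[Theorem 6.2]{hong-koeppe-zhou:software-paper}.

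The main obstacle is the genuinely two-sided discontinuous, non-regular case $\theta=\tilde\pi$: an effective perturbation need not be piecewise linear, continuous, or even measurable, so one cannot read slopes off graphs. The two delicate points are (i) keeping the countable exceptional set $N$ under control so that the increment identities still propagate across all of $\relint F$, and (ii) converting the Cauchy-additive increment function into a genuinely linear one using only boundedness of $\tilde\pi$. For $\theta=\pi$ both difficulties evaporate ($N=\emptyset$ and $\pi$ is piecewise linear), which is precisely why in the continuous case one even obtains affinity on the \emph{closed} projection intervals $p_i(F)$, as noted in the footnote to the statement.
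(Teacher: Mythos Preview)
Your proposal is correct and matches the paper's treatment: the paper does not give a proof but simply records the statement as the specialization of the convex additivity domain lemma \cite[Theorem~4.3]{igp_survey} to a single two-dimensional face, citing \cite[Theorem~6.2]{hong-koeppe-zhou:software-paper}, exactly as you do in your final sentence.

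One simplification you miss: for $\theta=\tilde\pi$ the exceptional set $N$ is in fact empty. Since $\pi\pm\epsilon\tilde\pi$ are subadditive, one has $|\Delta\tilde\pi(x,y)|\le\Delta\pi(x,y)/\epsilon$ pointwise; and on $\relint F$ the function $\Delta\pi$ coincides with the affine function $\Delta\pi_F\equiv0$ (because $(x,y)\in\relint F$ forces $x\in\intr I$, $y\in\intr J$, $x+y\in\intr K$, where $\pi$ agrees with its affine pieces). Hence $\Delta\tilde\pi\equiv0$ on $\relint F$ identically, not merely off a countable set. This removes the difficulty you flag under~(i) and lets the convex additivity domain lemma apply directly, without any countable-exceptional-set bookkeeping.
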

In the situation of this result, we say that the intervals $\intr(p_1(F))$,
$\intr(p_2(F))$, and $\intr(p_3(F))$ are \emph{(directly) covered}\footnote{In
  the terminology of \cite{basu-hildebrand-koeppe:equivariant}, these
  intervals are said to be \emph{affine imposing}.} 
and are in the same \emph{connected covered component}\footnote{Connected covered components, extending
  the terminology of \cite{basu-hildebrand-koeppe:equivariant}, are simply
  collections of intervals on which an effective perturbation function is affine with the same
  slope.  This notion of connectivity is unrelated to that in the topology of
  the real line.}.  Subintervals of covered intervals are covered.
%
%In the diagrams of the polyhedral complex $\Delta\P$, colors are used to indicate membership in a connected component. Within a connected covered component, the function~$\pi$, and any perturbation $\tilde\pi$, is affine linear with the same slope. 

%% Now let $F$ be a one-dimensional additive face (edge) of
%% $\Delta\mathcal{P}$. The edge $F$ can be vertical, horizontal, or
%% diagonal.
%% Then two of the projections $\intr(p_1(F))$, $\intr(p_2(F))$ and
%% $\intr(p_3(F))$ % \footnote{The closed intervals $p_1(F), p_2(F)$ and $p_3(F)$
%%   % are considered when $\pi$ is % a
%%   % continuous% function
%%   % .}
%% are
%% one-dimensional (proper intervals), whereas the third projection is a
%% singleton. 

In addition to directly covered intervals, we also have \emph{indirectly
  covered intervals}, which arise from vertical, horizontal, or diagonal
additive edges $F$ of $\Delta\mathcal{P}$.  To handle these additive edges in
our setting of two-sided discontinuous functions with irrational breakpoints,
we need a new technical tool, which we develop in the following section.

\section{Existence of limits of effective perturbations at certain points and a general
  additive edge theorem}
\label{s:existence-of-limits}

The following main theorem is our weak counterpart of the regularity lemma, 
\autoref{survey-lemma-2.11(v)}, which does not require the assumption of
one-sided continuity.  %% Since the margins are too wide, the proof does
%% not fit here but appears in Appendix~\ref{s:omitted-proofs-sec2}.

\begin{theorem}
%%% \label{thm:perturbation_uniform_continuous} % -- do NOT provide this label.
\label{rk:perturbation-lim-exist-from-2d-face}  % -- keep this label as rk: for compatibility.
Let $\pi$ be a minimal valid function that is piecewise linear over a complex $\P$.
Let $\tilde{\pi} \in \tilde{\Pi}^{\pi}(\R,\Z)$ be an effective perturbation function for $\pi$. 
If a point $(u, v)$ in a two-dimensional face $F \in \Delta\P$ satisfies that
$\Delta\pi_F(u,v)=0$, then the limits 
\[\lim_{\substack{x\to u\\x \in \intr(p_1(F))}}\tilde{\pi}(x), \quad \lim_{\substack{y\to v\\y \in \intr(p_2(F))}}\tilde{\pi}(y)\quad\text{ and }\quad\lim_{\substack{z\to u+v\\z \in \intr(p_3(F))}}\tilde{\pi}(z)\] 
exist.
\end{theorem}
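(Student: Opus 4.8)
The plan is to combine three ingredients: the transfer of tightness from $\pi$ to $\tilde\pi$ (\autoref{lemma:tight-implies-tight}), the boundedness of $\tilde\pi$, and a self-improving inequality on the ``oscillations'' of $\tilde\pi$ at the three points $u$, $v$, $u+v$. First, since $\pi$ and $\pi\pm\epsilon\tilde\pi$ are minimal valid and hence take values in $[0,1]$, we have $|\tilde\pi|\le 1/\epsilon$, so $\tilde\pi$ is bounded. Next, $\Delta\pi_F(u,v)=0$ together with \autoref{lemma:tight-implies-tight} gives $\Delta\tilde\pi_F(u,v)=0$, i.e.\ $\tilde\pi(x)+\tilde\pi(y)-\tilde\pi(x+y)\to 0$ as $(x,y)\to(u,v)$ within $\relint(F)$. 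If $\Delta\pi_F\equiv 0$ on $F$, then $F$ is a two-dimensional additive face and \autoref{thm:directly_covered} already shows $\tilde\pi$ is affine on $\intr(p_i(F))$, so the claimed limits exist. Otherwise, since $\Delta\pi_F\ge 0$ on $F$ (subadditivity, extended to the boundary by the limit definition \eqref{eq:Delta-pi-F}) and is affine and not identically zero, its zero set $E=\{(x,y)\in F:\Delta\pi_F(x,y)=0\}$ is a proper exposed face of the polygon $F$, hence a vertex or an edge, lying on one of the six lines $x=\mathrm{const}$, $y=\mathrm{const}$, $x+y=\mathrm{const}$ that bound $F$; and $(u,v)\in E$.

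The core of the argument is then the following. Since $\tilde\pi$ is bounded, the one-sided upper and lower limits $A^{\pm}=\limsup/\liminf_{x\to u,\,x\in\intr(p_1(F))}\tilde\pi(x)$ are finite; define $\omega_1=A^{+}-A^{-}\ge 0$, and analogously $B^{\pm},\omega_2$ at $v$ along $\intr(p_2(F))$ and $C^{\pm},\omega_3$ at $u+v$ along $\intr(p_3(F))$. The goal is $\omega_1=\omega_2=\omega_3=0$, which is exactly the assertion that the three limits exist. From $\Delta\tilde\pi_F(u,v)=0$: for every $\epsilon>0$ there is a neighborhood $N$ of $(u,v)$ with $|\tilde\pi(x)+\tilde\pi(y)-\tilde\pi(x+y)|<\epsilon$ for all $(x,y)\in N\cap\relint(F)$. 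Choosing $(x,y)\in N\cap\relint(F)$ with $\tilde\pi(x)>A^{+}-\epsilon$ and $\tilde\pi(y)>B^{+}-\epsilon$ yields $\tilde\pi(x+y)>A^{+}+B^{+}-3\epsilon$ with $x+y\in\intr(p_3(F))$ arbitrarily close to $u+v$, so $C^{+}\ge A^{+}+B^{+}$; symmetrically $C^{-}\le A^{-}+B^{-}$, giving $\omega_3\ge\omega_1+\omega_2$. Rearranging $\tilde\pi(x)+\tilde\pi(y)-\tilde\pi(x+y)\approx 0$ so that $x$, respectively $y$, is the ``output'' variable, the same reasoning gives $\omega_1\ge\omega_2+\omega_3$ and $\omega_2\ge\omega_1+\omega_3$. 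Adding the three inequalities gives $\omega_1+\omega_2+\omega_3\ge 2(\omega_1+\omega_2+\omega_3)$, hence $\omega_1+\omega_2+\omega_3\le 0$, and since each term is nonnegative, all vanish.

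The step that needs real care, and which I expect to be the main obstacle, is the \emph{realizability} used three times above: that near-extreme values of $\tilde\pi$ on two of the three ``sides'' can be attained simultaneously by points $(x,y)\in\relint(F)$ arbitrarily close to $(u,v)$. This is where the location of $(u,v)$ on the additive edge $E$ enters. When $(u,v)\in\relint(F)$ we are in the additive-face case; otherwise one checks, by a short case analysis of the local shape of the polygon $F$ at $(u,v)$ (according to which of the six bounding lines $E$ lies on, and, when $(u,v)$ is a vertex of $F$, which pair of lines meet there), that the ``limiting fiber'' of $\relint(F)$ over $u$ still has nonempty relative interior — equivalently, that for all $x\in\intr(p_1(F))$ close to $u$ the open interval $\{y:(x,y)\in\relint(F)\}$ contains a fixed one-sided neighborhood of $v$ inside $\intr(p_2(F))$, and the analogous statements after the shears $(x,y)\mapsto(x+y,y)$ and $(x,y)\mapsto(x,x+y)$ that make $u$ or $v$ the output variable. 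Given this, the supremum of $\tilde\pi$ over those fibers approaches $B^{+}$ (resp.\ the one-sided limsup) as $x\to u$, which supplies the required pairs. I would organize this as a short geometric lemma about $\Delta\P$-faces, likely shared with the companion ``additive edge'' results of this section, so that the oscillation computation above can be stated cleanly once the lemma is in place.
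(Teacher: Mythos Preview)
Your oscillation approach is elegant in spirit, but the ``realizability'' step has a genuine gap that does not reduce to a routine case check.

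First, the fiber lemma you propose is already false at vertices. Take the sharp-angle tangent cone of the paper's Case~3a, $C=\{(x,y):x\le u,\ x+y\ge u+v\}$. For $x=u-\alpha\in\intr(p_1(F))$ the fiber $\{y:(x,y)\in\relint(F)\}$ equals $(v+\alpha,\,v+\eta)$, which is bounded away from~$v$ by~$\alpha$ and therefore contains no fixed one-sided neighborhood of~$v$. The same phenomenon occurs for the sheared fibers in the first-quadrant case.

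More seriously, even if one replaces the fiber lemma by the refined tactic ``choose the outer coordinate first, then the inner one in the resulting shrinking neighborhood,'' one of the three inequalities $\omega_i\ge\omega_j+\omega_k$ is unprovable when $(u,v)$ lies in the relative interior of an edge of~$F$. Take the half-plane tangent cone $\{y\ge v\}$ (the paper's Case~4). In local coordinates $\alpha=x-u$, $\gamma=(x+y)-(u+v)$, the constraint for~$(x,z)$ is the oblique half-plane $\gamma>\alpha$, with both $A^{\pm}$ and $C^{\pm}$ two-sided. If the two-sided $\limsup$ $C^{+}$ is attained only from the left ($\gamma<0$) while the two-sided $\liminf$ $A^{-}$ is attained only from the right ($\alpha>0$), then $\gamma>\alpha$ forces $\gamma>0>\alpha$ and no compatible pair exists; so you cannot conclude $B^{+}\ge C^{+}-A^{-}$, and hence not $\omega_2\ge\omega_1+\omega_3$. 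From the two inequalities that \emph{do} hold in this case you only obtain $\omega_2=0$ and $\omega_1=\omega_3$, which is not enough.

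The paper avoids this obstruction entirely by a different mechanism: a telescoping (averaging) chain of length~$N$. Given $x,x'$ close to~$u$, it builds points $(x,y_{n+1}),(x',y_n)\in\relint(F)$ with $x+y_{n+1}=x'+y_n$, sums the approximate additivities, and divides by~$N-1$; the boundedness of~$\tilde\pi$ then makes the residual $|\tilde\pi(y_N)-\tilde\pi(y_1)|/(N-1)$ small. This yields a Cauchy estimate $|\tilde\pi(x)-\tilde\pi(x')|<2\epsilon$ directly, without ever needing to realize near-extreme values simultaneously. Your proof could be rescued by supplementing the oscillation argument with exactly such a Cauchy step in the half-plane cases, but at that point you are essentially reproducing the paper's method.
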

%\begin{remark}
%\label{rk:limit_value_exists}
%By the uniform continuity (or by the Lipschitz continuity from \autoref{lemma:perturbation-lipschitz-continuous} when $\pi$ is not two-sided discontinuous), $\lim_{\substack{x \to p_i(u,v) \\ x \in \intr(p_i(F))}} \tilde\pi(x)$ exists for each $i \in \{1,2,3\}$. Thus, the limit value $c^{\tilde{\pi}}_{i}$ in \autoref{lemma:additivity-equation-for-move} exists in the discontinuous case as well, showing that the move diagram makes sense. \tred{ Need a lemma telling what an additive edge means in the discontinuous case!}
%\end{remark}

It is convenient to first prove the following ``pexiderized'' \cite{bhk-IPCOext} proposition.
\begin{proposition}
  \label{prop:near-pexider}
  Let $F$ be a two-dimensional face of $\Delta\P$, where $\P$ is the
  one-dimensional polyhedral complex
  of a piecewise linear function.  Let $(u, v) \in F$. 
  For $i = 1, 2, 3$, let $\tilde\pi_i\colon \R\to\R$ be a function that is 
  bounded near $p_i(u,v)$. 
  If 
  \[\lim_{\substack{(x,y) \to (u,v)\\ (x,y) \in \intr(F)}}
    {\tilde\pi_1(x)+\tilde\pi_2(y)-\tilde\pi_3(x+y)} =0,\] 
  then for $i=1,2,3$, the limit $\lim_{t\to
  p_i(u,v),\; t\in \intr(p_i(F))} \tilde\pi_i(t)$  exists. 
  %% \[\lim_{\substack{x\to u\\x \in \intr(p_1(F))}}\tilde{\pi}_1(x), \quad
  %%   \lim_{\substack{y\to v\\y \in \intr(p_2(F))}}\tilde{\pi}_2(y)\quad\text{ and
  %% }\quad\lim_{\substack{z\to u+v\\z \in \intr(p_3(F))}}\tilde{\pi}_3(z)\]  
  %% exist.
\end{proposition}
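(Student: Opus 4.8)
The plan is to reduce the three-variable limit statement to a clean functional-equation argument in the relative interior of $F$. First I would set up coordinates: since $F$ is a two-dimensional face of $\Delta\P$, it is a polygon, and $\relint(F)$ is an open convex set whose images $\intr(p_1(F))$, $\intr(p_2(F))$, $\intr(p_3(F))$ are open intervals around $p_1(u,v)=u$, $p_2(u,v)=v$, $p_3(u,v)=u+v$ respectively. The hypothesis says $\tilde\pi_1(x)+\tilde\pi_2(y)-\tilde\pi_3(x+y)\to 0$ as $(x,y)\to(u,v)$ within $\relint(F)$. The idea is to play two nearby points against each other to cancel the $\tilde\pi_3$ term: if $(x,y)$ and $(x',y')$ both lie in $\relint(F)$ with $x+y=x'+y'$, then subtracting the two near-vanishing expressions gives $\bigl(\tilde\pi_1(x)-\tilde\pi_1(x')\bigr)+\bigl(\tilde\pi_2(y)-\tilde\pi_2(y')\bigr)\to 0$ as both points approach $(u,v)$. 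This is the standard ``pexiderization'' manipulation, and it decouples the problem.

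The key steps, in order: (1) Show that $\tilde\pi_1$ has a limit at $u$ along $\intr(p_1(F))$. Because $\tilde\pi_1$ is bounded near $u$, its set of subsequential limits along $\intr(p_1(F))$ is a nonempty compact interval; I must show it is a single point. Suppose two sequences $x_n\to u$ and $x_n'\to u$ in $\intr(p_1(F))$ have $\tilde\pi_1(x_n)\to a$ and $\tilde\pi_1(x_n')\to a'$. Using the geometry of the polygon $F$ near the vertex/point $(u,v)$, I can find, for each $n$, points $y_n,y_n'$ with $(x_n,y_n),(x_n',y_n')\in\relint(F)$, both $y_n,y_n'\to v$, and $x_n+y_n=x_n'+y_n'$ (i.e. I move along a line of constant $p_3$-value inside the polygon; this is where I need $F$ two-dimensional, so that such lines through points near $(u,v)$ meet $\relint(F)$ in a segment of positive length). (2) Apply the cancellation identity: $(\tilde\pi_1(x_n)-\tilde\pi_1(x_n'))+(\tilde\pi_2(y_n)-\tilde\pi_2(y_n'))\to 0$. (3) Symmetrically, pairing points with constant $x$-value gives control of $\tilde\pi_2$ differences, and one more pairing (constant $y$, varying the common sum) closes the loop — essentially, the three statements ``$\tilde\pi_i$ oscillates by at most $\omega_i$ near $p_i(u,v)$'' satisfy $\omega_1\le\omega_2$, $\omega_2\le\omega_3$, $\omega_3\le\omega_1$ via these pairings, hence all the $\omega_i$ are equal; combined with the original hypothesis forcing $\omega_1+\omega_2+\omega_3$-type bounds to vanish (or more directly, forcing $\omega_1-\omega_2-\omega_3\le 0$ etc.), one concludes $\omega_1=\omega_2=\omega_3=0$. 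Once the oscillation is zero and the function is bounded, the one-sided limit exists.

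The main obstacle I anticipate is the geometric bookkeeping in step (1)/(3): one must be careful that the auxiliary points $(x_n,y_n)$, $(x_n',y_n')$ genuinely lie in $\relint(F)$ (not merely in $F$), that they approach $(u,v)$, and that the constant-fiber segments one slides along have length bounded below in a controlled way so the pairing is always available — this uses that $(u,v)\in F$ together with $F$ being full-dimensional in $\R^2$, which guarantees a cone of directions into $\relint(F)$ at $(u,v)$. A clean way to organize this is to choose an interior point $(x_0,y_0)\in\relint(F)$ once and for all, and for $(x,y)$ near $(u,v)$ use points of the form $(1-t)(u,v)+t(x_0,y_0)$-perturbations; convexity then makes membership in $\relint(F)$ automatic. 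A secondary subtlety is that the hypothesis is stated as a limit over $\relint(F)$ (not over $F$), so all approaches must stay strictly inside — the convexity trick handles this too. The rest (passing from ``oscillation $=0$'' plus boundedness to ``limit exists'', and noting the three limits equal the respective one-sided limits used in \autoref{rk:perturbation-lim-exist-from-2d-face}) is routine.
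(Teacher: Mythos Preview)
Your cancellation idea in steps (1)--(2) is sound and is also the paper's starting point. The gap is in step (3). The pairings along constant-$p_3$, constant-$p_1$, constant-$p_2$ fibers give at best $\omega_1=\omega_2=\omega_3=:\omega$; they do \emph{not} force $\omega=0$. Your proposed closing inequality ``$\omega_1-\omega_2-\omega_3\le 0$'' reads $\omega\le 2\omega$, which is vacuous, and nothing in the hypothesis produces an upper bound on $\omega_1+\omega_2+\omega_3$. What is actually needed is the \emph{reverse} inequality for one index: depending on the shape of the tangent cone of $F$ at $(u,v)$, exactly two of the three coordinates $x$, $y$, $z=x+y$ can be chosen independently inside the cone (e.g., $x,y$ for a first-quadrant cone; $x,z$ for a sharp-angle cone), and then the cluster set of the remaining function contains the sumset of the cluster sets of the other two, yielding $\omega_{\mathrm{dep}}\ge\omega_A+\omega_B$. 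Combined with $\omega_{\mathrm{dep}}\le\omega_A$ (from a constant-$B$ fiber) this forces all $\omega_i=0$. So your plan can be salvaged, but the decisive inequality and the cone-by-cone identification of which variable is ``dependent'' are missing from the sketch. (Your suggested convexity trick---fixing one interior point $(x_0,y_0)$---gives only a single ray into $\relint(F)$, not enough freedom to realize the independent choices.)

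The paper takes a different route and avoids the oscillation bookkeeping entirely. It fixes $x,x'$ close to $u$ and builds a finite ladder $y_1,\dots,y_N$ with $y_{n+1}-y_n=x'-x$, staying inside the tangent cone; applying the single cancellation to the pairs $(x,y_{n+1})$, $(x',y_n)$ (which share the same $p_3$-value) and telescoping gives
\[
\bigl|(N-1)\bigl(\tilde\pi_1(x)-\tilde\pi_1(x')\bigr)+\tilde\pi_2(y_N)-\tilde\pi_2(y_1)\bigr|\le (N-1)\,\varepsilon/2.
\]
Boundedness of $\tilde\pi_2$ then yields $|\tilde\pi_1(x)-\tilde\pi_1(x')|\le 2M/(N-1)+\varepsilon/2\le\varepsilon$ for $N$ large. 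This amplification step is the missing idea; it replaces your step (3) by a direct bound on $\omega_1$, after which $\omega_2$ and $\omega_3$ follow from one more cancellation each. The price is an explicit case analysis on the tangent cone (right-angle, obtuse, sharp, halfplane, full plane) to ensure the ladder $\{y_n\}$ fits.
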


\begin{proof}
We will prove the following claim.
\smallskip

\emph{Claim.}  
For $i =1, 2, 3$ and every $\epsilon>0$, there exists a neighborhood $N_i = N_i(\epsilon)$ of $p_i(u,v)$ so
  that for all $t, t' \in N_i \cap \intr(p_i(F))$, we have
  $|\tilde\pi_i(t) - \tilde\pi_i(t')| < 2\epsilon$.
\smallskip\par

When the claim is proved, the proposition follows.
Indeed, let $i \in \{1, 2, 3\}$ and take any sequence $\{ t_j \}_{j\in\N} \subset \intr(p_i(F))$ that converges to
$p_i(u,v)$.  Then by the claim, $\{ \tilde\pi_i(t_j) \}_{j\in\N}$ is a Cauchy sequence and
hence converges to a limit $L \in \R$.  Take any other sequence $\{ t'_j
\}_{j\in\N} \subset \intr(p_i(F))$ that converges to $p_i(u,v)$; then again
$\{ \tilde\pi_i(t'_j) \}_{j\in\N}$ converges to some limit $L' \in \R$.  Let
$\epsilon>0$.  Let $J$ be an index such that 
$t_j, t_j' \in N_i(\epsilon)$ and also $|L - \tilde\pi_i(t_j)| < \epsilon$ and $|L' - \tilde\pi_i(t_j')|
< \epsilon$ for all $j \geq J$.  Then $$|L - L'| = \bigl|\bigl(L -
\tilde\pi_i(t_j)\bigr) - \bigl(L' - \tilde\pi_i(t'_j)\bigr) + 
\bigl(\tilde\pi_i(t_j) - \tilde\pi_i(t'_j)\bigr)\bigr| < 4\epsilon$$ for $j\geq J$.  
Hence, $L = L'$.  Thus, $\lim_{t\to
  p_i(u,v),\; t\in \intr(p_i(F))} \tilde\pi_i(t)$ exists.
\smallbreak

\def\COMMONSCALE{0.35}
\begin{figure}[tp]
\begin{minipage}{.48\textwidth}
\includegraphics[scale=\COMMONSCALE]{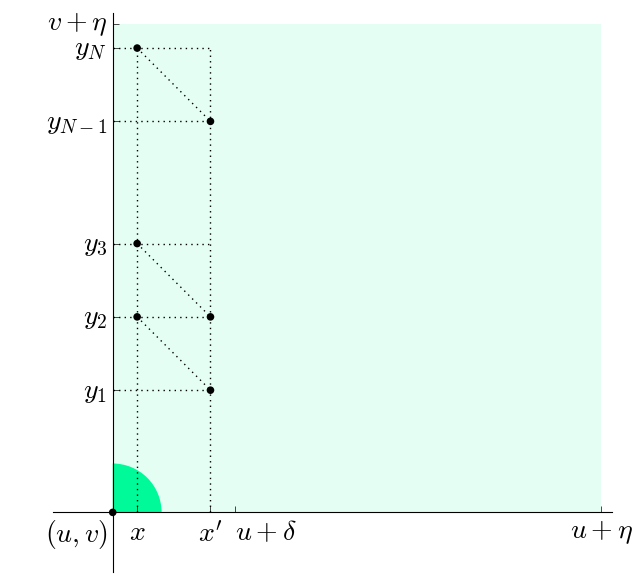}
\par\centering
  Case 1
\end{minipage}
\hfill
\begin{minipage}{.46\textwidth}
\includegraphics[scale=\COMMONSCALE]{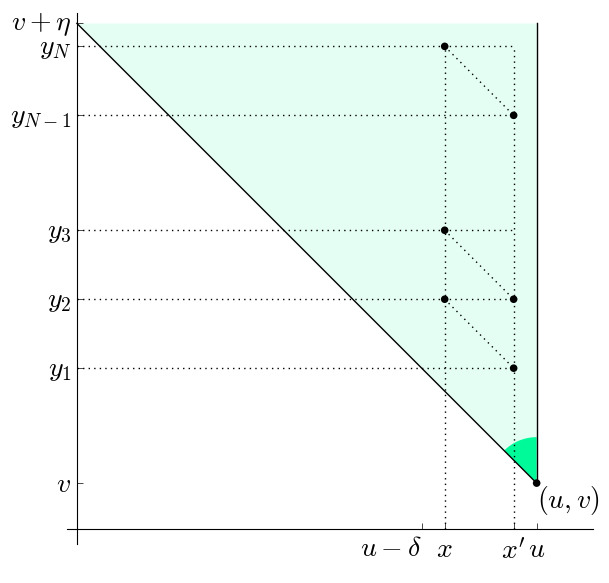}
\par\centering
  Case 3
\end{minipage}
\vspace{3ex}\par
\centering
\begin{minipage}{.58\textwidth}
\includegraphics[scale=\COMMONSCALE]{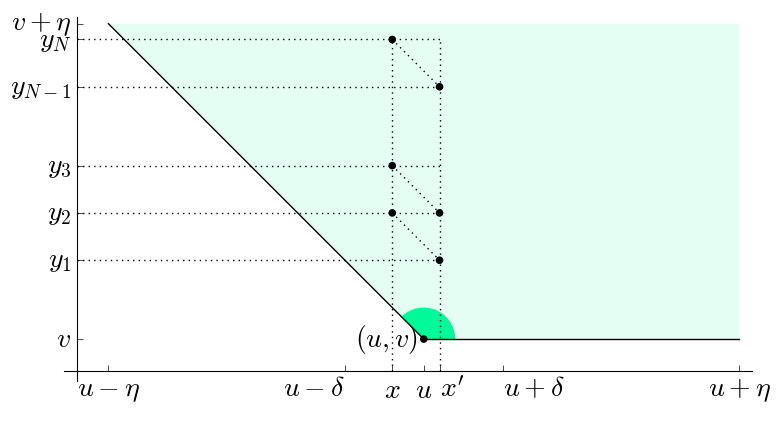}
\par\centering
  Case 2
\end{minipage}
%\begin{minipage}{.32\textwidth}
%\includegraphics[width=\linewidth]{graphics-for-algo-paper/proof_uniform_cont_case_4.png}
%\end{minipage}
\caption{Illustration of the proof of
  \autoref{prop:near-pexider} for $U$. Three partial diagrams of 
  $\Delta\P$, where the tangent cone $C$ of a two-dimensional face $F\in
  \Delta\P$ at vertex $(u,v)$ is a (\textit{left}, Case~1): right-angle cone (first
  quadrant); 
  (\textit{bottom}, Case~2): obtuse-angle cone; (\textit{right}, Case~3):
  sharp-angle cone (contained in a second quadrant). The \emph{light green
    area} $C_\eta$ is contained in the face~$F$. The \emph{green sector} at
  $(u,v)$ indicates that 
  $\Delta\pi_F(u,v)=0$. The \emph{black points} inside the \emph{light green
    area} show the sequences used in the proof of equation~\eqref{eq:case123-U}.}
\label{fig:proof_uniform_continuous}
\end{figure}
\smallbreak

We now prove the claim.  Let $\varepsilon >0$.  Denote $\Delta\tilde\pi(x,y)
:= \tilde\pi_1(x) + \tilde\pi_2(y) - \tilde\pi_3(x+y)$. 
There exists $\eta >0$ such that for any $(x,y)\in \intr(F)$ satisfying
$\|(x,y)-(u,v)\|_{\infty}<\eta$, we have \[\left|\Delta\tilde\pi(x,y)\right| <
  \varepsilon / 4.\]
Consider the tangent cone $C$ of $F$ at the point $(u, v)$.
One can assume that $\eta$ is small enough, so that 
\[ C_\eta := \bigl\{\, (x, y) \in C : \|(x,y)-(u,v)\|_{\infty} \leq \eta \,\bigr\} \subseteq F \]
and $|\tilde\pi_i(t)| \leq M$ for $t \in p_i(C_\eta)$ for $i = 1, 2, 3$, for some $M > 0$.
Let $N$ be a positive integer such that $N > 4M/\varepsilon + 1$. Define $\delta = \eta/(2N) > 0$. 
\smallbreak

We first consider the situation when $(u,v)$ is a vertex of $F$.  
There are 12 different possible tangent cones that can be formed from bounding
hyperplanes $x=u$, $y=v$ and $x+y=u+v$. 
By transformation using the mappings $(x,y) \mapsto (y,x)$ and $(x,y) \mapsto
(-x, -y)$, under which the statement of the proposition is covariant, and
their composition, $(x,y)\mapsto (-y, -x)$,
one can assume that the tangent cone~$C$ belongs to one of the cases below.

\emph{Case 1} (right-angle corner, first quadrant,
  \autoref{fig:proof_uniform_continuous} left):
  Then $C_\eta = [u, u+\eta] \times [v,v+\eta] \subseteq F$.
  Define $U := (u, u+\delta)$, $V := (v, v+\delta)$, and $W := (u+v, \allowbreak u+v+\delta)$.

\emph{Case 2} (obtuse-angle corner, \autoref{fig:proof_uniform_continuous}
  bottom): 
  Then the quadrilateral $C_\eta = \conv\Bigl(\binom{u}{v}, \binom{u-\eta}{v+\eta}, \allowbreak
    \binom{u+\eta}{v+\eta}, \binom{u+\eta}{ v}\Bigr)$ is contained in $F$.
  Define 
  $U := (u-\delta, u+\delta) $,
  $V := (v, v+\delta) $, and $W := (u+v, \allowbreak u+v+\delta) $. 

  \emph{Case 3a} (sharp-angle corner, \autoref{fig:proof_uniform_continuous} right):
  Then $C_\eta = \conv\left(\binom{u}{v}, \binom{u-\eta}{v+\eta},
    \binom{u}{v+\eta}\right)$ is contained in $F$.
  Define $U := (u-\delta, u)$, $V := (v, v+\delta)$, and $W := (u+v, u+v+\delta)$.

  \emph{Case 3b} (right-angle corner, second quadrant):
  The sharp-angle corner of Case~3a appears as a subcone.  Define $U$ and $V$
  as in Case~3a and $W := (u+v-\delta/2,\allowbreak u+v+\delta/2)$.  

Note that in all cases, $U$, $V$, and $W$ are sets of the form $N_i\cap
\intr(p_i(F))$ for $i = 1, 2, 3$, respectively, as in the claim.

We now show that
\begin{equation}
  \label{eq:case123-U}
  \begin{aligned}
    \qquad&\text{for all $x, x' \in U = N_1 \cap
      \intr(p_1(F))$,
      we have
      $\left|\tilde\pi_1(x)-\tilde\pi_1(x')\right| \leq \varepsilon < 2 \varepsilon$.}
  \end{aligned}
\end{equation}
Let $x, x' \in U$, without loss of generality $x < x'$.  Define a sequence
$y_n = v + \delta + (n-1)(x' -x)$ for $1 \leq n \leq N$. Note that
$\left| x - u\right| \leq \delta < \eta$, $\left| x' - u\right| \leq \delta < \eta$ and $\left| y_n - v \right| =  \left|\delta + (n-1)(x'-x) \right| \leq (2n-1)\delta < \eta$ for $1 \leq n \leq N$. For each $n \in \{1,2, \dots, N-1\}$, since $(x, y_{n+1}), (x', y_n) \in \intr(F)$, $\|(x, y_{n+1})-(u,v)\|_\infty < \eta$ and $\|(x', y_n)-(u,v)\|_\infty < \eta$, we have
 $\left| \tilde\pi_1(x)+\tilde\pi_2(y_{n+1}) -\tilde\pi_3(x+y_{n+1})\right|
 \leq \varepsilon /4$ and  $\left| \tilde\pi_1(x')+\tilde\pi_2(y_n)
   -\tilde\pi_3(x'+y_n)\right| \leq \varepsilon /4$. Note that
 $x+y_{n+1}=x'+y_n$ for $n \in \{1,2, \dots, N-1\}$, so by the triangle inequality,
\[\left| \tilde\pi_1(x)-\tilde\pi_1(x')+\tilde\pi_2(y_{n+1})-\tilde\pi_2(y_n)\right| \leq \varepsilon/2.\]
It follows from summing over $n =1, 2, \dots, N-1$ and the triangle inequality that 
\[\bigl|(N-1)\bigl(\tilde{\pi}_1(x)-\tilde{\pi}_1(x')\bigr)+\tilde{\pi}_2(y_N)-\tilde{\pi}_2(y_1)\bigr|
  \leq (N-1)\varepsilon/2.\]
Therefore,
\begin{equation*}
  \begin{aligned}
    \left|\tilde\pi_1(x)-\tilde\pi_1(x')\right| &\leq
    \left|\tilde\pi_2(y_N)-\tilde\pi_2(y_1)\right| / (N-1) +\varepsilon /2 \\ 
    &\leq 2M /(N-1)+\varepsilon/2 \leq \varepsilon < 2 \varepsilon.
  \end{aligned}
\end{equation*}

\begin{figure}[tp]
\begin{minipage}{.5\textwidth}
\includegraphics[scale=\COMMONSCALE]{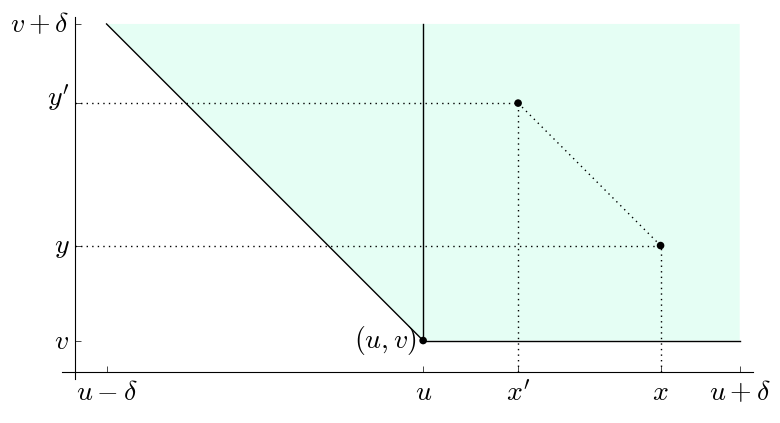}
\par\centering
  Case 1 and Case 2
\end{minipage}
\hfill
\begin{minipage}{.4\textwidth}
\includegraphics[scale=\COMMONSCALE]{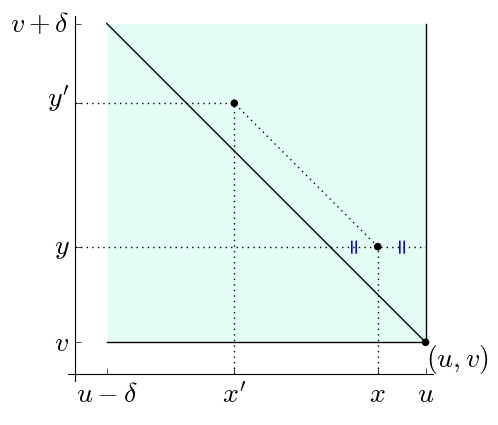}
\par\centering
  Case 3
\end{minipage}
\caption{Illustration of the proof of
  \autoref{prop:near-pexider} for~$V$,
  equation~\eqref{eq:V-from-U}.}
\label{fig:V-from-U}
\end{figure}
Next we show that 
\begin{equation}\label{eq:V-from-U}
\begin{aligned}
  &\text{for all $y, y' \in V = N_2 \cap \intr(p_2(F))$, we have
    $\left|\tilde\pi_2(y)-\tilde\pi_2(y')\right|  < 2 \varepsilon$.}
\end{aligned}
\end{equation}
%% Proof uses V and U.
Let $y, y' \in V$, without loss of generality $y < y'$.  In Case 1 and 2,
define $x = u + (y' - v)$; in Case 3a and 3b, define $x = u - (y - v)/2$. 
Let $x' = x + y - y'$. See \autoref{fig:V-from-U}. Then $x, x' \in U$, and
hence $|\tilde\pi_1(x) - 
\tilde\pi_1(x')| \leq \varepsilon$.  We have $|\Delta\tilde\pi(x,y)| < \varepsilon/4$
and $|\Delta\tilde\pi(x',y')| < \varepsilon/4$. Then
\begin{equation*}
  \begin{aligned}
    \left|\tilde\pi_2(y) - \tilde\pi_2(y')\right| &= \left|\Delta\tilde\pi(x,y) -
      \Delta\tilde\pi(x',y') - (\tilde\pi_1(x) - \tilde\pi_1(x'))\right| \\ & \leq
    \varepsilon/4 + \varepsilon/4 + \varepsilon < 2\varepsilon.
  \end{aligned}
\end{equation*}

\begin{figure}[tp]
\begin{minipage}{.4\textwidth}
\includegraphics[scale=\COMMONSCALE]{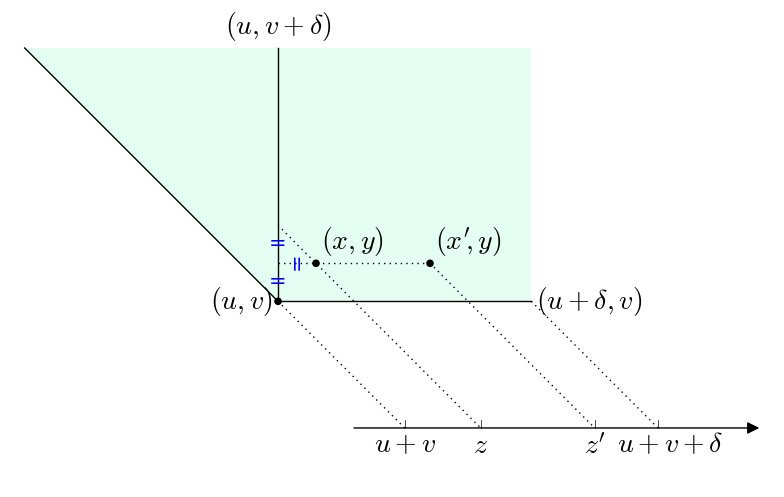}
\par\centering
  Case 1 and Case 2
\par\vspace{20ex}
\end{minipage}
\hfill
\begin{minipage}{.55\textwidth}
\includegraphics[scale=\COMMONSCALE]{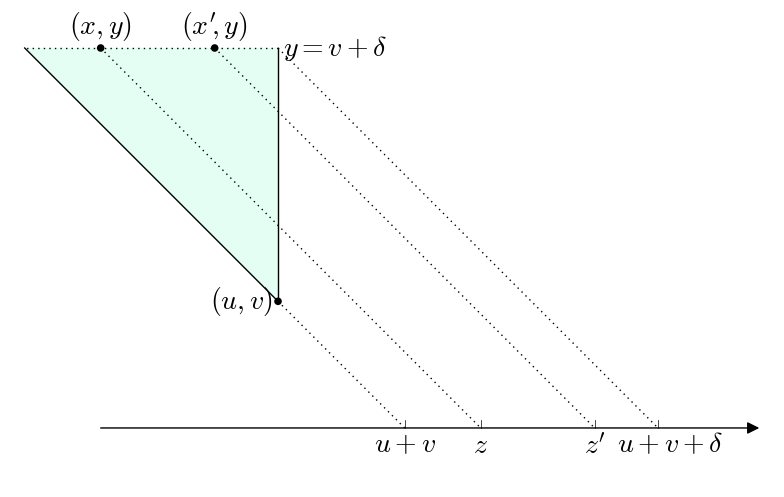}
\par{\centering
  Case 3a
\par\vspace{8ex}}
\includegraphics[scale=\COMMONSCALE]{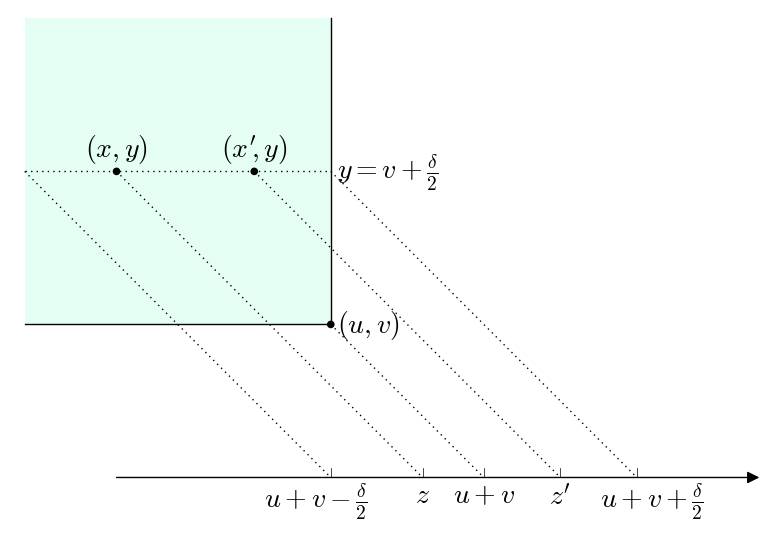}
\par\centering
  Case 3b
\end{minipage}
\caption{Illustration of the proof of
  \autoref{prop:near-pexider} for~$W$,
  equation~\eqref{eq:W-from-U}.}
\label{fig:W-from-U}
\end{figure}
Finally, we show that 
\begin{equation}\label{eq:W-from-U}
\begin{aligned}
  &\text{for all $z, z' \in W = N_3 \cap \intr(p_3(F))$, we have
    $\left|\tilde\pi_3(z)-\tilde\pi_3(z')\right| <  2\varepsilon$.}  
  \end{aligned}
\end{equation}
%% Proof uses W and U.
Let $z, z' \in W$, without loss of generality $z < z'$.  In Case 1 and 2,
define $y = v+(z-(u+v))/2$; in Case 3a, $y = v+\delta$; and in Case 3b, $y =
v+\delta/2$.  Let $y'=y$.  Define $x = z - y$ and $x' = z' - y'$. 
See \autoref{fig:W-from-U}.
Then $x, x'
\in U$, and hence $|\tilde\pi_1(x) -
\tilde\pi_1(x')| \leq \varepsilon$. 
Since $(x, y), (x', y') \in \intr(F)$ and $y - v  = y' - v < \eta$, we have
$|\Delta\tilde\pi(x,y)| < \varepsilon/4$ 
and $|\Delta\tilde\pi(x',y')| < \varepsilon/4$.
Then
\begin{equation*}
  \begin{aligned}
    \left|\tilde\pi_3(z)-\tilde\pi_3(z')\right| &= \left|
      -\Delta\tilde{\pi}(x, y) + \Delta\tilde{\pi}(x', y') +
      (\tilde{\pi}_1(x) - \tilde{\pi}_1(x')) \right| \\ & \leq \varepsilon/4 +
    \varepsilon/4 +\varepsilon < 2\varepsilon.
  \end{aligned}
\end{equation*}
\smallbreak

Now we consider the tangent cones that arise when $(u,v)$ is not a vertex of
$F$.  Then by transformation using the same mappings, one can assume that the
tangent cone $C$ is one of the following three cases:

\emph{Case 4} (upper halfplane, $y \geq v$): The obtuse-angle corner of Case~2
above is a subcone of the tangent cone.  Thus, \eqref{eq:case123-U} and
\eqref{eq:V-from-U} hold for
$U := (u-\delta, u+\delta)$ and $V := (v, v+\delta)$ by the same proofs.  
Also the 2nd quadrant of Case~3b appears as a 
subcone of~$C$.  
% Note $U' := (u-\delta, u) \subseteq U$, $V' := (v, v+\delta) = V$, 
Thus, \eqref{eq:W-from-U} holds for 
$W := (u+v-\delta/2, u+v+\delta/2)$ by the same proof.  
Note that $U$, $V$, and $W$ are
sets of the form $N_i\cap \intr(p_i(F))$ for $i=1,2,3$, respectively.

\emph{Case 5} (upper-right halfplane, $x+y \geq u+v$):
Again the obtuse-angle corner of Case~2 above is a subset of the tangent cone.
Thus, \eqref{eq:case123-U} and \eqref{eq:W-from-U} hold for
$U := (u-\delta, u+\delta)$ and $W := (u+v, u+v+\delta)$ by the same proofs. 
By using the transformation $(x,y) \mapsto (y,x)$ and applying
the proof of~\eqref{eq:case123-U} for Case~2 again, we also get
\eqref{eq:V-from-U} for $V := (v-\delta, v+\delta)$.
Again $U$, $V$, and $W$ are sets of the form $N_i\cap \intr(p_i(F))$ for $i=1,2,3$, respectively.

\emph{Case 6} (entire plane). 
%% In this case, \Delta\pi is constantly 0 on int F...
Applying Case~4, we get \eqref{eq:case123-U}
and \eqref{eq:V-from-U} for $U := (u-\delta, u+\delta)$ and
$W := (u+v-\delta/2, u+v+\delta/2)$.  Applying Case~4 using the transformation
$(x,y) \mapsto (y,x)$, we get \eqref{eq:V-from-U} for $V := (v-\delta,
v+\delta)$.  Again $U$, $V$, and $W$ are sets of the form $N_i\cap
\intr(p_i(F))$ for $i=1,2,3$, respectively.
\end{proof}

Now we prove the theorem.
\begin{proof}[Proof of \autoref{rk:perturbation-lim-exist-from-2d-face}]
  Let $F$ be a two-dimensional face of $\Delta\mathcal{P}$. Let $(u,v)\in F$ such that $\Delta\pi_F(u,v)=0$. 
  By \autoref{lemma:tight-implies-tight}, $\Delta\pi_F(u,v)=0$ implies
  that \[\Delta\tilde\pi_F(u,v) = \lim_{\substack{(x,y) \to (u,v)\\ (x,y) \in
        \intr(F)}} {\tilde\pi(x)+\tilde\pi(y)-\tilde\pi(x+y)} =0.\]  
  Because minimal valid functions take values in $[0,1]$, 
  the effective perturbation $\tilde\pi$ is bounded. 
  Thus, we can apply \autoref{prop:near-pexider} to $\tilde\pi_1 = \tilde\pi_2 =
  \tilde\pi_3 = \tilde\pi$, which gives the existence of the limits. 
\end{proof}

As a consequence we obtain the following theorem regarding additive edges.  It
is a common generalization of \cite[Lemma
4.5]{basu-hildebrand-koeppe:equivariant} by removing the assumption that all
the breakpoints of $\pi$ are rational numbers; and of \cite[Theorem
6.3]{hong-koeppe-zhou:software-paper} % arxiv v2
by removing the assumption of one-sided continuity.

\begin{theorem}
\label{thm:indirectly_covered}
Let $\pi$ be a minimal function that is piecewise linear over $\P$. 
Let $F$ be a one-dimensional additive face (edge) of $\Delta\P$.
Let $\{i,j\} \subset \{1,2,3\}$ such that $p_i(F)$ and $p_j(F)$ are proper intervals. 
Let $E \subseteq F$ be a sub-interval. For $\theta=\pi$ or $\theta
=\tilde{\pi} \in \tilde{\Pi}^{\pi}(\R,\Z)$, if $\theta$ is affine in $I =
\intr(p_i(E))$, then $\theta$ is affine in $I' = \intr(p_j(E))$ as well with
the same slope. 
\end{theorem}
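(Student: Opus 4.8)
The plan is to split the proof into a ``pointwise'' additivity case and a ``limiting'' case, the latter being where the new tool \autoref{rk:perturbation-lim-exist-from-2d-face} enters. First some reductions. Since $F$ is one-dimensional and $p_i(F),p_j(F)$ are proper intervals, the third projection collapses $F$ to a single breakpoint, so $F$ is a horizontal, vertical, or diagonal edge of $\Delta\P$. Using the coordinate symmetries under which $\Delta\pi$ is invariant --- the swap $(x,y)\mapsto(y,x)$ together with the affinity $(x,y)\mapsto(x,f-x-y)$, which between them permute the r\^oles of $p_1,p_2,p_3$ --- I reduce to the diagonal case: $p_3(F)=\{c\}$ with $c$ a breakpoint, $\{i,j\}=\{1,2\}$, $F=\{(t,c-t):t\in p_1(F)\}$, $I'=c-I$, and (because $E\subseteq F$) $I\subseteq\relint(p_1(F))$, $I'\subseteq\relint(p_2(F))$; the other two edge types are entirely analogous. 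For $\theta=\pi$ the statement is immediate: $\pi$ is piecewise linear, hence affine on $I$ and on $I'$ (which lie in relative interiors of faces of $\P$), and the two slopes agree because differentiating the identity $\Delta\pi_{F'}\equiv 0$ on $F$ along the direction $(1,-1)$ of $F$ forces the linear pieces of $\pi$ over $p_1(F)$ and $p_2(F)$ to have equal slopes, where $F'$ is whatever face witnesses the additivity of $F$. So from now on $\theta=\tilde\pi$.

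\emph{Case 1: $\Delta\pi_F\equiv 0$ on $F$.} For each $t\in\relint(p_1(F))$ one has $(t,c-t)\in\relint F$, hence $\Delta\pi(t,c-t)=\Delta\pi_F(t,c-t)=0$; since $\pi\pm\epsilon\tilde\pi$ are subadditive, the same subadditivity inequality is tight for $\tilde\pi$, i.e. $\tilde\pi(t)+\tilde\pi(c-t)=\tilde\pi(c)$ for all such $t$ --- the pointwise companion of \autoref{lemma:tight-implies-tight}. Specializing to $t\in I$, on which $\tilde\pi$ is affine, shows that $v\mapsto\tilde\pi(v)=\tilde\pi(c)-\tilde\pi(c-v)$ is affine on $I'=c-I$, with the same slope. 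This already subsumes the continuous and one-sided continuous situations of the cited theorems.

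\emph{Case 2: $\Delta\pi_F\not\equiv 0$ on $F$.} Then the additivity of $F$ must be witnessed by a two-dimensional face $F'\supseteq F$ with $\Delta\pi_{F'}(u,v)=0$ for every $(u,v)\in F$, and necessarily $\pi$ is discontinuous at $c$ from the side of $F'$. For each $(u,v)\in F$ I apply \autoref{rk:perturbation-lim-exist-from-2d-face} to $F'$ at $(u,v)$ to obtain the one-sided limits $\ell_1(u)=\lim_{x\to u,\,x\in\intr(p_1(F'))}\tilde\pi(x)$, $\ell_2(v)=\lim_{y\to v,\,y\in\intr(p_2(F'))}\tilde\pi(y)$, and $\ell_3=\lim_{z\to c,\,z\in\intr(p_3(F'))}\tilde\pi(z)$, valid for all $u\in p_1(F)$, $v\in p_2(F)$. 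Then \autoref{lemma:tight-implies-tight} applied to $F'$ gives $\Delta\tilde\pi_{F'}(u,v)=0$ on $F$; passing to the limit inside $\relint(F')$ and splitting the three terms --- legitimate precisely because the individual limits just exhibited exist, and $x+y\to c$ --- this reads $\ell_1(u)+\ell_2(c-u)=\ell_3$ for every $u\in p_1(F)$. Since $I\subseteq\intr(p_1(F'))$ is open and $\tilde\pi$ is affine, hence continuous, on $I$, we get $\ell_1=\tilde\pi$ on $I$, whence $\ell_2(v)=\ell_3-\tilde\pi(c-v)$ is affine on $I'=c-I$ with the same slope as $\tilde\pi$ on $I$.

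The remaining step in Case 2 --- and, I expect, the main obstacle --- is to promote ``$\ell_2$ affine on $I'$'' to ``$\tilde\pi$ affine on $I'$'', i.e. to show $\tilde\pi(v_0)=\ell_2(v_0)$ at every $v_0\in I'$, where the limit defining $\ell_2(v_0)$ is two-sided since $v_0\in I'\subseteq\intr(p_2(F'))$. In the one-sided continuous world this is free from \autoref{survey-lemma-2.11(v)} (there $\tilde\pi$ is continuous on $\relint(I_2)\supseteq I'$ because $\pi$ is), but that lemma is unavailable here and an effective perturbation may a priori carry removable discontinuities. I would show that it cannot, using minimality of $\mu=\pi\pm\epsilon\tilde\pi$: a discrepancy $\tilde\pi(v_0)\ne\ell_2(v_0)$ at a point $v_0$ where $\pi$ is continuous makes, for one choice of sign, the value $\mu(v_0)$ strictly exceed both its one-sided limits, and then the subadditivity inequalities $\mu(v_0)\le\mu(v_0-s)+\mu(s)$ together with the symmetry equality $\mu(v_0)+\mu(f-v_0)=1$ pin $\mu(v_0)$ back to those limits --- the contribution of $\mu$ near the origin, which is the genuinely two-sided-discontinuous ingredient, being controlled by one further invocation of \autoref{rk:perturbation-lim-exist-from-2d-face} along an additive face through the translate $v_0-s$. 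Once $\tilde\pi=\ell_2$ on $I'$, the conclusion, including equality of slopes, follows; the variant where $\theta$ is assumed affine on $\intr(p_2(E))$ instead of on $\intr(p_1(E))$ is symmetric under $t\leftrightarrow c-t$.
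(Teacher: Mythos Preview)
Your overall architecture is right and matches the paper: reduce to a single edge type, handle $\theta=\pi$ by differentiating the additive relation, and for $\theta=\tilde\pi$ bring in \autoref{rk:perturbation-lim-exist-from-2d-face} exactly where the witness $F'$ for the additivity of $F$ is two-dimensional. You also correctly isolate the one genuinely new difficulty in the two-sided discontinuous setting: passing from ``$\ell_2$ affine on $I'$'' to ``$\tilde\pi$ affine on $I'$'', i.e., showing $\tilde\pi(v_0)=\ell_2(v_0)$ for each $v_0\in I'$.

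The gap is in your proposed resolution of that step. Your argument invokes the subadditivity inequality $\mu(v_0)\le\mu(v_0-s)+\mu(s)$ and claims the contribution $\mu(s)$ near the origin can be controlled ``by one further invocation of \autoref{rk:perturbation-lim-exist-from-2d-face} along an additive face through the translate $v_0-s$''. No such additive face is supplied by the hypotheses; only $F$ and $F'$ are available, and neither passes through $(v_0-s,s)$. Moreover, in the genuinely two-sided discontinuous situation $\limsup_{s\to0}\mu(s)$ is typically strictly positive, so the inequality does not pin $\mu(v_0)$ back to its one-sided limits. The appeal to symmetry does not close this either: it only transfers the same removable-discontinuity issue to $f-v_0$.

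The clean fix is much shorter and stays entirely inside $F'$. For $(u_0,v_0)\in\relint(F)$ with $u_0=c-v_0\in I$, approach $(u_0,v_0)$ along the path $s\mapsto(u_0+s,\,v_0)$ (or $(u_0-s,\,v_0)$, according to the side of $F$ on which $F'$ lies). Since the tangent cone of the two-dimensional face $F'$ at a relative-interior point of its facet $F$ is the half-plane $\{a+b\ge0\}$ (up to sign), this path lies in $\intr(F')$ for small $s>0$. By \autoref{lemma:tight-implies-tight} the full two-dimensional limit $\Delta\tilde\pi_{F'}(u_0,v_0)$ exists and equals $0$, so this particular sequential limit is $0$ as well. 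But along this path the middle term $\tilde\pi(v_0)$ is constant, while $\tilde\pi(u_0+s)\to\tilde\pi(u_0)$ by continuity of $\tilde\pi$ on~$I$, and $\tilde\pi(c+s)\to\ell_3$ by \autoref{rk:perturbation-lim-exist-from-2d-face} applied at $p_3$. Hence
\[
\tilde\pi(u_0)+\tilde\pi(v_0)-\ell_3=0,\qquad\text{i.e.,}\qquad \tilde\pi(v_0)=\ell_3-\tilde\pi(c-v_0),
\]
which is affine in $v_0$ with the same slope as $\tilde\pi$ on $I$. This bypasses $\ell_2$ entirely and is what the referenced proof becomes once its appeal to the existence-of-limits corollary is replaced by \autoref{rk:perturbation-lim-exist-from-2d-face}.
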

\begin{proof}
  The proof %% of \autoref{thm:indirectly_covered}
  is identical to the one of \cite[Theorem
  6.3]{hong-koeppe-zhou:software-paper}, % arxiv v2
  replacing the use of \cite[Corollary 6.5]{hong-koeppe-zhou:software-paper}
  % arxiv v2
  regarding the existence of limits by our
  \autoref{rk:perturbation-lim-exist-from-2d-face}.
\end{proof}

In the situation of the theorem, the two proper intervals $p_i(E)$ and
$p_j(E)$ are said to be \emph{connected} through a translation 
(when $F$ is a vertical or horizontal edge) or through a reflection (when $F$
is a diagonal edge). An interval $I'$ that is connected to a covered interval
$I$ is said to be \textit{(indirectly) covered} and in the same connected covered
component as $I$.

\section{An algorithm for a restricted class of locally quasimicroperiodic perturbations}
\label{s:algorithm_for_restricted_class_of_crazy_perturbtions}

In the following, consider a finitely generated additive subgroup~$T$ of the
real numbers, $T = \langle t_1, t_2, \dots, t_n\rangle_{\Z}$, that is dense
in~$\R$. 

\begin{definition}
  We define a restricted class\footnote{These functions are represented by
    instances of the class \sage{PiecewiseCrazyFunction}.} of \emph{locally quasimicroperiodic}
  functions as follows.  
  Let $\bar B$ be a finite set of breakpoints in $[0,1]$.
  Consider a (perturbation) function written 
  as $\bar\pi=\bar\pi^{\mathrm{pwl}}+\bar\pi^{\mathrm{micro}}$ that is
  periodic modulo $\Z$, 
  where $\bar\pi^{\mathrm{pwl}}$ is (possibly discontinuous) piecewise linear over $\mathcal{P}_{\bar{B}}$, %$\bar\pi^{\mathrm{pwl}}(0)=0$, 
  and $\bar\pi^{\mathrm{micro}}$ is a locally microperiodic function satisfying that
  % microperiodic over the interiors of some one-dimensional faces of $\mathcal{P}_\bar{B}$ and zero otherwise.
  \begin{enumerate}[\rm(1)]
  \item $\bar\pi^{\mathrm{micro}}(x)=0$ on any breakpoint % ($0$-dimensional face)
    $x \in \bar{B}$.
  \item For a closed interval % (one-dimensional face)
    $I$ in $\mathcal{P}_{\bar B}$, $\bar\pi^{\mathrm{micro}}$ restricted to $\intr(I)$ is defined as
\[
\bar\pi^{\mathrm{micro}}_I(x)= 
  \begin{cases} 
   c_1^I & \text{if } x \in  b_1^I + T;\\
   &\vdots \\
   c_{m_I}^I & \text{if } x \in  b_{m_I}^I + T;\\
   0      & \text{otherwise},
  \end{cases}
\]
where $m_I\in\Z_+$.
\end{enumerate}
(Thus, the function deviates from the piecewise linear function
$\bar\pi^{\mathrm{pwl}}$ only on \emph{finitely many} of the infinitely many additive
cosets.  This restriction is in place to enable computations with this class of functions.)
%The group $ \langle a_1^I, \dots, a_{n_I}^I\rangle_{\Z}$ is a 
%lattice of the \tred{number field} considered as a $\Q$-vector space. Assuming that
%the generators $a_1^I, \dots, a_{n_I}^I$ are linearly independent over $\Q$,
%they form a lattice basis of $ \langle a_1^I, \dots, a_{n_I}^I\rangle_{\Z}$.
%%%(of the $\Q$-span of $\{a_1^I, \dots, a_{n_I}^I\}$?) -- No, of the $\Z$-span!!
%We further assume that $n_I \geq 2$ (when $m_I > 0$); then $ \langle a_1^I, \dots,
%a_{n_I}^I\rangle_{\Z}$ is dense in the real numbers. 
We assume that $c^I_i \neq 0$ and $b^I_i - b^I_j \not\in T$ for $i,j \in \{1, \dots, m_I\}, i\neq j$.
If $m_I = 0$, then $\bar\pi^{\mathrm{micro}}\equiv 0$ on~$I$, and so 
$\bar\pi$ is linear on~$\intr(I)$. 
If $m_I >0$, we
say that $\bar\pi^{\mathrm{micro}}$ is \emph{locally microperiodic}, or, more
precisely, \emph{microperiodic on the open
interval $\intr(I)$}. 
Since $\bar\pi$, restricted to $\intr(I)$, differs from $\bar\pi^{\mathrm{micro}}$ by a linear function, we
say that it is \emph{quasimicroperiodic on $\intr(I)$}. 
\end{definition}
The perturbation function $\bar\pi$
%\eqref{eq:crazy_perturbation_for_kzh_minimal_has_only_crazy_perturbation_1} 
that we will discuss in 
\autoref{s:proof-th:kzh_minimal_has_only_crazy_perturbation_1}
belongs to this restricted class.

We now consider the following algorithmic problem.\footnote{This is
  implemented in \sage{find\_epsilon\_for\_crazy\_perturbation($\pi$, $\bar\pi$)}.
  %in \sage{crazy\_perturbation.sage}.
}
\begin{problem}
  \label{prob:crazy_perturbation_effective}
  Given
  (i) a %two-sided discontinuous 
    minimal valid function $\pi$ that is
    piecewise linear over $\mathcal{P}_B$,
  (ii) a restricted locally quasimicroperiodic function $\bar\pi$ 
    over $\mathcal{P}_{\bar B}$,
  determine whether $\bar\pi$ is an effective perturbation function for
  $\pi$, i.e., $\bar\pi \in \tilde\Pi^{\pi}(\R,\Z)$, and if yes, 
  find an $\epsilon>0$ such that $\pi\pm\epsilon\bar{\pi}$ are minimal valid functions.
\end{problem}
(By taking the union of the breakpoints and thus defining a common refinement
of the complexes $\mathcal{P}_B$ and $\mathcal{P}_{\bar{B}}$, we can assume
that $\mathcal{P}_B=\mathcal{P}_{\bar{B}}$.)
%% if you make the set \bar B bigger, all locally quasimicroperiodics are still locally quasimicroperiodic on the refined \bar B.
%% --- just move the nonzero pimicro value to a pi pwl discontinuity at the
%% new breakpoints. 

Consider the two-dimensional polyhedral complex $\Delta\mathcal{P}_B$ and its faces $F$ introduced in \autoref{s:preliminaries}.
\autoref{thm:crazy} below % gives the solution to
solves
the decision problem
of~\autoref{prob:crazy_perturbation_effective}.  Its proof explains how to
find~$\epsilon$. 

\begin{theorem}
\label{thm:crazy}
Let $\pi$ and $\bar\pi$ be as above, with $\bar\pi(0)=\bar\pi(f)=0$. The perturbation $\bar\pi \in \tilde\Pi^{\pi}(\R,\Z)$ if and only if
\begin{enumerate}[\rm(1)]
\item for any face $F$ of $\Delta\mathcal{P}_B$ and $(u, v) \in F$ satisfying $\Delta\pi_F(u,v)=0$, we have $\Delta\bar\pi_F(u,v)=0$; and
\item  for any face $F$ of $\Delta\mathcal{P}_B$ of positive dimension such that there exists $(u,v)\in F$ satisfying $\Delta\pi_F(u,v)=0$, we have $\Delta\bar\pi^{\mathrm{micro}}_F(x,y)=0$ for all $(x,y) \in F$.
\end{enumerate}
\end{theorem}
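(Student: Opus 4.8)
The plan is to reduce the statement ``$\pi\pm\epsilon\bar\pi$ is minimal valid for a suitable $\epsilon>0$'' to a single subadditivity estimate, and then to verify that estimate face by face on $\Delta\mathcal P_B$ using the splitting $\bar\pi=\bar\pi^{\mathrm{pwl}}+\bar\pi^{\mathrm{micro}}$. First I would record two reductions. By the remark following \autoref{prob:crazy_perturbation_effective} one may assume $\mathcal P_B=\mathcal P_{\bar B}$. Next, $\pi\pm\epsilon\bar\pi$ is automatically $\Z$-periodic and bounded, vanishes at $0$, and equals $1$ at $f$ (using $\bar\pi(0)=\bar\pi(f)=0$); a bounded $\Z$-periodic subadditive function is nonnegative --- iterating subadditivity, $g(nx_0)\le n\,g(x_0)$ contradicts boundedness when $g(x_0)<0$ --- and the symmetry condition then confines its values to $[0,1]$. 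Hence it suffices to choose $\epsilon>0$ so that $\pi\pm\epsilon\bar\pi$ is subadditive and symmetric.

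The technical core is the local picture on a face $F=F(I,J,K)$ of $\Delta\mathcal P_B$; by $\Z^2$-periodicity of $\Delta\pi$ and $\Delta\bar\pi$ only finitely many faces matter, and each is a bounded polytope. On $\relint F$ one has $\Delta\pi=\Delta\pi_F$ (affine and $\ge 0$) and $\Delta\bar\pi=\Delta\bar\pi^{\mathrm{pwl}}_F+\Delta\bar\pi^{\mathrm{micro}}$, where $\Delta\bar\pi^{\mathrm{pwl}}_F$ is affine and $\Delta\bar\pi^{\mathrm{micro}}$ takes only finitely many values, each realized on a dense subset of $\relint F$ cut out by $T$-coset conditions on $x$, $y$, and $x+y$; moreover the value $0$ is realized on the dense set where $x$, $y$, $x+y$ simultaneously avoid all relevant cosets. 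Granting this, the forward implication is short: condition (1) is \autoref{lemma:tight-implies-tight} applied to the effective perturbation $\bar\pi$; and for (2), if $F$ is positive-dimensional with $\Delta\pi_F(u_0,v_0)=0$ for some $(u_0,v_0)\in F$, the same lemma gives $\Delta\bar\pi_F(u_0,v_0)=0$, so the limit of $\Delta\bar\pi$ along $\relint F$ exists and hence, subtracting the continuous $\Delta\bar\pi^{\mathrm{pwl}}_F$, so does that of $\Delta\bar\pi^{\mathrm{micro}}$. Since each value of $\Delta\bar\pi^{\mathrm{micro}}$ is attained arbitrarily close to $(u_0,v_0)$, the limit can exist only if $\Delta\bar\pi^{\mathrm{micro}}$ is constant on $\relint F$, and since $0$ is among its values the constant equals $0$; passing to the limit at the boundary gives $\Delta\bar\pi^{\mathrm{micro}}_F\equiv 0$ on $F$. (Alternatively, \autoref{rk:perturbation-lim-exist-from-2d-face} applied to the one-dimensional projections yields the same conclusion.)

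For the converse, assume (1) and (2). Symmetry comes first: every pair $(x,f-x)$ satisfies $\Delta\pi(x,f-x)=1-1=0$, hence lies in the relative interior of some face $F$ with $\Delta\pi_F=0$ there; by (2) (and the vanishing of $\bar\pi^{\mathrm{micro}}$ at breakpoints when $\dim F=0$), $\Delta\bar\pi^{\mathrm{micro}}$ vanishes on $\relint F$, and by (1) $\Delta\bar\pi_F(x,f-x)=0$, so $\Delta\bar\pi(x,f-x)=0$, i.e.\ $\bar\pi(x)+\bar\pi(f-x)=\bar\pi(f)=0$; thus $\pi\pm\epsilon\bar\pi$ is symmetric for every $\epsilon$. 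For subadditivity one needs $\Delta\pi\ge\epsilon\,|\Delta\bar\pi|$ everywhere, and it suffices to handle $\relint F$ for each of the finitely many faces. There, for each value $c_k$ of $\Delta\bar\pi^{\mathrm{micro}}$, the inequality on the dense set $\{\Delta\bar\pi^{\mathrm{micro}}=c_k\}$ reads $\Delta\pi_F\ge\epsilon\,|\Delta\bar\pi^{\mathrm{pwl}}_F+c_k|$, which by density, continuity, and concavity of $\Delta\pi_F-\epsilon\,|\Delta\bar\pi^{\mathrm{pwl}}_F+c_k|$ on the polytope $F$ reduces to a check at $\verts(F)$. At a vertex with $\Delta\pi_F>0$ any sufficiently small $\epsilon$ works. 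At a vertex with $\Delta\pi_F=0$, the face $F$ contains an additive point, so by (2) $\Delta\bar\pi^{\mathrm{micro}}\equiv 0$ on $\relint F$ (so $0$ is the only value $c_k$), while (1) gives $\Delta\bar\pi_F=0$ at that vertex, whence --- the micro part being zero --- $\Delta\bar\pi^{\mathrm{pwl}}_F=0$ there, and $0\ge\epsilon\cdot 0$ holds for every $\epsilon$. Taking $\epsilon$ to be the minimum of the resulting finitely many face-wise bounds completes the argument and produces the $\epsilon$ requested in \autoref{prob:crazy_perturbation_effective}.

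The step I expect to be the main obstacle is exactly this interplay of the affine ``piecewise linear'' part with the wild microperiodic part on each face: one must argue cleanly that subadditivity survives on a set that is dense in $\relint F$ but riddled with the $T$-coset structure, which is why separating out the finitely many values of $\Delta\bar\pi^{\mathrm{micro}}$ and reducing to a finite vertex check is the right device; and one must see that hypothesis (2) is precisely strong enough to rule out the only bad configuration, a vertex of $F$ at which $\Delta\pi_F$ vanishes while $\Delta\bar\pi^{\mathrm{micro}}$ is not identically $0$. Secondary care is needed at low-dimensional faces and at breakpoints (where $\bar\pi^{\mathrm{micro}}\equiv 0$ by definition), and in keeping the pointwise value of $\Delta\bar\pi$ separate from the limit value $\Delta\bar\pi_F$ and from the limit of $\Delta\bar\pi^{\mathrm{micro}}$, which exists only when that function is locally constant.
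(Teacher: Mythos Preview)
Your proof is correct, and the converse ($\Leftarrow$) direction is essentially the paper's argument: split faces according to whether they contain an additive point, use condition~(2) to make $\Delta\bar\pi_F$ affine on the additive faces, and reduce to a finite vertex check bounded by the strictly positive slack $m$ on the non-additive faces.

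The forward ($\Rightarrow$) direction, however, takes a genuinely different route from the paper. The paper treats the two-dimensional case by invoking \autoref{rk:perturbation-lim-exist-from-2d-face} to force the one-dimensional limits of $\bar\pi$ to exist, hence $m_I=m_J=m_K=0$; and it treats the one-dimensional case separately via a Lipschitz-type estimate $|\Delta\bar\pi_F(x,v)|\le\frac\alpha\epsilon|x-u|$ followed by a hands-on sequence argument in each coset. Your argument is more uniform and more elementary: you only use \autoref{lemma:tight-implies-tight} to obtain existence of the \emph{two-dimensional} limit $\Delta\bar\pi_F(u_0,v_0)$, subtract the continuous part $\Delta\bar\pi^{\mathrm{pwl}}_F$, and then exploit the structural fact that on $\relint F$ the function $\Delta\bar\pi^{\mathrm{micro}}$ takes only finitely many values, each realized on a dense subset (including the value~$0$), so that existence of the limit forces it to be identically zero. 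This bypasses \autoref{rk:perturbation-lim-exist-from-2d-face} entirely in the proof of \autoref{thm:crazy}, at the cost of leaning more explicitly on the coset structure of the restricted class. The paper's route, in turn, showcases \autoref{rk:perturbation-lim-exist-from-2d-face} as the analytic replacement for the failed regularity lemma and yields the slightly stronger intermediate conclusion $m_I=m_J=m_K=0$ in the two-dimensional case (which your argument also recovers a posteriori).
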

%\tred{Maybe should revise the definition of $\bar\Pi^{E(\pi)}(\R,\Z)$ to introduce face $F$ and limit value $\Delta\pi_F(u,v)$, so that it works for the discontinuous $\pi$ as well. Then the result becomes:\\
%Let $\pi$ and $\bar\pi$ be as above. The perturbation $\bar\pi \in \tilde\Pi^{\pi}(\R,\Z)$ if and only if
%\begin{enumerate}
%\item $\bar\pi \in \bar\Pi^{E(\pi)}(\R,\Z)$ and
%\item Let $F$ be face of $\Delta\mathcal{P}_B$, $F \not\in \verts(\Delta\mathcal{P}_B)$.  If there is $(u, v) \in F$ with $\Delta\pi_F(u,v)=0$, then $\Delta\bar\pi^{\mathrm{micro}}_F(x,y)=0$ for any $(x,y) \in F$.
%\end{enumerate}}

\begin{proof}[Proof for the $\Rightarrow$ direction:]
%\tred{In case 2, the proof requires further assumption on the groups.}
Assume $\bar\pi \in \tilde\Pi^{\pi}(\R,\Z)$. Let $\epsilon>0$ such that $\pi+\epsilon \bar\pi$ and $\pi-\epsilon \bar\pi$ are minimal valid functions. Let $F$ be a face of $\Delta\mathcal{P}_B$ and let $(u, v) \in F$ satisfying $\Delta\pi_F(u,v)=0$.  
By \autoref{lemma:tight-implies-tight}, we have $\Delta\bar\pi_F(u,v)=0$. It remains to prove the second necessary condition when $F$ is not a vertex (zero-dimensional face) of $\Delta\mathcal{P}_B$. 

Case 1: Assume that $F$ is a two-dimensional face of $\Delta\mathcal{P}_B$. The projections $p_i(F)$ ($i \in \{1,2,3\}$) are proper intervals. Assume that $p_1(F) \subseteq I$, $p_2(F) \subseteq J$, and $p_3(F) \subseteq K$, where $I, J, K \in \mathcal{P}_B$. By \autoref{rk:perturbation-lim-exist-from-2d-face}, 
the limits $\lim_{t\to p_i(u,v),\; t\in \intr(p_i(F))} \bar\pi(t)$ exist.
Hence, from the definitions of $\bar\pi^{\mathrm{micro}}_I, \bar\pi^{\mathrm{micro}}_J$, and $\bar\pi^{\mathrm{micro}}_K$, we know that $m_I = m_J = m_K =0$. Then $\bar\pi^{\mathrm{micro}}(x) = 0$ for $x \in I$, $x\in J$, or $x \in K$, which implies that $\Delta\bar\pi^{\mathrm{micro}}_F(x,y)=0$ for any $(x,y) \in F$.

Case 2: Assume that $F$ is a one-dimensional face of $\Delta\mathcal{P}_B$. Without loss of generality, we may assume that $F$ is a horizontal edge, i.e., $p_1(F)$ and $p_3(F)$ are closed intervals and $p_2(F)$ is a singleton.  Let $I, J, K \in \mathcal{P}_B$ such that $p_1(F) \subseteq I$, $p_3(F) \subseteq K$ and $J = p_2(F) = \{v\}$. By hypothesis, $(u,v)\in F$, so $u \in p_1(F) \subseteq I$ and $u+v \in p_3(F) \subseteq K$. 

Since the function $x \mapsto \Delta\pi_F(x, v)$ is affine linear over $p_1(F)$, there exists a constant $\alpha >0$ such that $\Delta\pi_F(x, v) \leq \Delta\pi_F(u,v) + \alpha\left| x-u\right|$ for all $x \in p_1(F)$. Let $x \in p_1(F)$. By the hypothesis $\Delta\pi_F(u,v) = 0$, we have  $\Delta\pi_F(x, v) \leq \alpha\left| x-u\right|$. It follows from the subadditivity of $\pi\pm\epsilon\bar\pi$ that 
\[\left|\Delta\bar\pi_F(x,v)\right| \leq \frac{1}{\epsilon} \Delta\pi_F(x,v) \leq \frac{\alpha}{\epsilon} \left| x-u\right|.\] 
Therefore, the function $\Delta\bar\pi_F(\cdot,v)\colon p_1(F) \to \R$ is continuous at $u$, with $\Delta\bar\pi_F(u, v)=0$.

Let $x \in\intr(p_1(F)) \subseteq \intr(I)$, then $x+v \in \intr(p_3(F)) \subseteq \intr(K)$. We can write $\bar\pi^{\mathrm{micro}}(x)$ and $\bar\pi^{\mathrm{micro}}(x+v)$ explicitly.
\[
\bar\pi^{\mathrm{micro}}(x) = \bar\pi^{\mathrm{micro}}_I(x) = 
\begin{cases} 
   c_i^I & \text{if } x \in  b_i^I + T,\; i \in \{1, \dots, m_I\};\\
   0      & \text{otherwise},
  \end{cases}
 \] 
 \[
\bar\pi^{\mathrm{micro}}(x+v) = \bar\pi^{\mathrm{micro}}_K(x+v) = 
\begin{cases} 
   c_k^K & \text{if } x+v \in  b_k^K + T, \; k \in \{1, \dots, m_K\};\\
   0      & \text{otherwise}.
  \end{cases}
 \] 
We also know that $\bar\pi^{\mathrm{micro}}(v) =0$ since $v$ is a breakpoint. 
Thus for $x \in \intr(p_1(F))$, we have that
\begin{equation}
\label{eq:subadditivity_slack_pimicro}
\Delta\bar\pi^{\mathrm{micro}}_F(x,v) = \Delta\bar\pi^{\mathrm{micro}}(x,v) = \bar\pi^{\mathrm{micro}}(x)- \bar\pi^{\mathrm{micro}}(x+v).
\end{equation}
%We first show that  $\Delta\bar\pi^{\mathrm{pwl}}_F(u,v)=0$. 
%Denote $T = \langle a_1^I, \dots, a_{n_I}^I,  a_1^K, \dots, a_{n_K}^K\rangle_{\Z}$.  
Pick a constant $b$ such that 
\(\forall i \in \{1, \dots, m_I\}\),  \(b \not\equiv b^I_i \pmod T\) and 
\(\forall k \in \{1, \dots, m_K\}\), \(b +v \not\equiv b^K_k \pmod T.\) 
Consider a sequence $\{u_j\}_{j \in\N}$ that converges to $u$ with $u_j \in
\intr(p_1(F))$, $u_j \equiv b \pmod T$ for each $j$. The sequence $\{u_j\}$
exists, since $T$ is dense in $\R$. We have \(\bar\pi^{\mathrm{micro}}(u_j)=
\bar\pi^{\mathrm{micro}}_I(u_j) =0\) and 
\(\bar\pi^{\mathrm{micro}}(u_j+v) = \bar\pi^{\mathrm{micro}}_K(u_j+v)= 0.\)
We know that \(\Delta\bar\pi^{\mathrm{pwl}}_F(u_j,v) =\Delta\bar\pi_F(u_j,v) - \Delta\bar\pi^{\mathrm{micro}}_F(u_j,v)\) and by \eqref{eq:subadditivity_slack_pimicro}, \(\Delta\bar\pi^{\mathrm{micro}}_F(u_j,v)=\bar\pi^{\mathrm{micro}}(u_j)- \bar\pi^{\mathrm{micro}}\allowbreak(u_j+v)=0.\)
Therefore, 
\(\Delta\bar\pi^{\mathrm{pwl}}_F(u_j,v) =\Delta\bar\pi_F(u_j,v). \)
Since % the sequence
$\{u_j\}_{j \in\N}$ converges to $u$, and the functions $\Delta\bar\pi^{\mathrm{pwl}}_F(\cdot, v)\colon p_1(F) \to \R$ and $\Delta\bar\pi_F(\cdot, v)\colon p_1(F) \to \R$ are continuous at $u$, by letting $j \to \infty$ in the equation above, we obtain that 
\(\Delta\bar\pi^{\mathrm{pwl}}_F(u,v) = \Delta\bar\pi_F(u, v)=0.\)

Suppose there exists $x \in \intr(p_1(F))$ such that $\Delta\bar\pi^{\mathrm{micro}}_F(x,v)\neq 0$. Since $T$ is dense in $\R$, we can find a sequence $\{u_j\}_{j \in\N}$ converging to $u$, with 
$u_j \in \intr(p_1(F))$, $u_j \equiv x \pmod T$. 
It follows from 
\(\Delta\bar\pi^{\mathrm{micro}}_F(u_j,v) =\Delta\bar\pi_F(u_j,v) - \Delta\bar\pi^{\mathrm{pwl}}_F(u_j,v)\)
and
\(\Delta\bar\pi^{\mathrm{micro}}_F(u_j,v)=\bar\pi^{\mathrm{micro}}(u_j)- \bar\pi^{\mathrm{micro}}(u_j+v)=\bar\pi^{\mathrm{micro}}(x)- \bar\pi^{\mathrm{micro}}(x+v)=\Delta\bar\pi^{\mathrm{micro}}_F(x,v)\)
that
\(\Delta\bar\pi_F(u_j,v) - \Delta\bar\pi^{\mathrm{pwl}}_F(u_j,v) = \Delta\bar\pi^{\mathrm{micro}}_F(x,v).\)
By letting $j\to \infty$ in the above equation, we have a contradiction:
\[0 =  \Delta\bar\pi_F(u, v) - \Delta\bar\pi^{\mathrm{pwl}}_F(u,v) = \Delta\bar\pi^{\mathrm{micro}}_F(x,v) \neq 0.\]
 Therefore, $\Delta\bar\pi^{\mathrm{micro}}_F(x,v)=0$ for all $x \in \intr(p_1(F))$. The constant $\Delta\bar\pi^{\mathrm{micro}}_F(x,v)=0$ extends to the endpoints of $p_1(F)$. We obtain that the statement holds for all $x \in p_1(F)$. This concludes the proof of the $\Rightarrow$ direction.
\end{proof}

\begin{proof}[Proof for the $\Leftarrow$ direction:]
Let $\pi$ and $\bar\pi = \bar\pi^{\mathrm{pwl}} + \bar\pi^{\mathrm{micro}}$ be
given. Assume that % for any face $F$ of $\Delta\mathcal{P}_B$ and $(u, v) \in F$ such that $\Delta\pi_F(u,v)=0$,
conditions (1) and (2) are satisfied. We want to find $\epsilon > 0$ such that $\pi^+ = \pi+\epsilon \bar\pi$ and $\pi^- = \pi-\epsilon \bar\pi$ are both minimal valid functions. 
Define
\begin{align*}
m := \min\{\,\Delta\pi_F(x,y) \mid {}& (x,y)\in \verts(\Delta \mathcal{P}_B),\, F
  \text{ is a face of } \Delta \mathcal{P}_B \\ &\text { such that } (x,y)\in F \text{ and } \Delta\pi_F(x,y)\neq 0\,\};
\end{align*}
\[M := \sup_{(x,y)\in \R^2} \left|\Delta\bar \pi(x,y)\right|.\]
Note that $M$ is well defined since $\bar\pi$ is bounded. If $M=0$, for any $\epsilon>0$, $\pi^+$ and $\pi^-$ are subadditive. In the following, we assume $M>0$.
Define \(\epsilon = \frac{m}{M}.\) We also have $m>0$, since $\pi$ is subadditive and $\Delta\pi$ is non-zero somewhere. Thus, $\epsilon>0$.

We claim that $\pi^+$ and $\pi^-$ are subadditive. Let $F$ be a face of $\Delta \mathcal{P}_B$ and let $(x, y) \in F$. We need to show that $\Delta\pi_F^{\pm}(x, y) \geq 0$.  Let $S=\{\,(u,v) \in F \mid \Delta\pi_F(u,v)=0\,\}$.
If $S=\emptyset$, then $\Delta\pi_F(u,v) \geq m$ for any $(u,v) \in \verts(F)$. Since $\Delta\pi_F$ is affine over $F$, we have that $\Delta\pi_F(x,y)\geq m$. Hence,
\begin{equation}
\label{eq:delta_pi_non_negative}
\begin{aligned}
  \Delta\pi_F^{\pm}(x, y) &= \Delta\pi_F(x, y) \pm \epsilon\Delta\bar\pi_F(x,y)
  \\&\geq \Delta\pi_F(x, y) -\epsilon\left| \Delta\bar\pi_F(x,y)\right| \geq m -
  \frac{m}{M}M = 0.
\end{aligned}
\end{equation}
If $S\neq \emptyset$ and $F$ is a zero-dimensional face. Then $\Delta\pi_F(x, y)=0$, and $\Delta\bar\pi_F(x, y)=0$ by condition (1). We have that $\Delta\pi_F^{\pm}(x, y) = 0$.
Now assume that $S\neq \emptyset$ and that the face $F$ has positive dimension. 
By condition (2), we have that $\Delta\bar\pi_F\equiv\Delta\bar\pi^\mathrm{pwl}_F$ on $F$. Therefore,  $\Delta\pi_F^{\pm}$ is affine on $F$. For any vertex $(u,v)$ of $F$, if $(u,v) \not\in S$, then $\Delta\pi_F(u,v)\geq m$ and thus $\Delta\pi_F^{\pm}(u, v) \geq 0$ by \eqref{eq:delta_pi_non_negative};
if $(u,v) \in S$, then $\Delta\bar \pi_F(u,v) = \Delta\pi_F(u,v)=0$ by condition (1), hence $\Delta\pi_F^{\pm}(u, v) =0$. Since $\Delta\pi_F^{\pm}$ is affine on $F$ and $(x,y)\in F$, we obtain that $\Delta\pi_F^{\pm}(x,y) \geq 0$.

We showed that $\pi^{\pm}$ are subadditive. It is clear that $\pi^{\pm}(0)=0$
and $\pi^{\pm}(f)=1$. Let $x\in \R$. Since $\Delta\pi(x, f-x)=0$, condition
(1) implies that $\Delta\bar\pi(x, f-x)=0$. We have
$\bar\pi(x)+\bar\pi(f-x)=\bar\pi(f)=0$, and thus the symmetry condition
$\pi^{\pm}(x) +\pi^{\pm}(f-x) = \pi(x)+\pi(f-x)=1$ is satisfied.  
We know that $\pi$ and $\bar\pi$ are bounded functions, thus $\pi^\pm$ are also bounded.
Suppose that $\pi^+(x)<0$ for some $x\in\R$. Then it follows from the subadditivity that $\pi^+(nx)\leq n\pi^+(x)$ for any $n \in \Z_+$, which contradicts the fact that $\pi^+$ is bounded. We obtain that the functions $\pi^\pm$ are non-negative. 

Therefore, $\pi^\pm$ are minimal valid functions. % We conclude that
Thus $\bar\pi \in \tilde\Pi^{\pi}(\R,\Z)$.
\end{proof}
%% Conditions (1), (2) can be checked finitely.  We explain the details in
%% Appendix~\ref{s:algorithm_for_restricted_class_of_crazy_perturbtions-appendix}, 
%% which also poses an open question regarding more
%% general classes of locally quasi\-microperiodic functions. 

\begin{remark}[Implementation details]
\label{rk:implementation_details}
%\textbf{About the Sage code.} 
%\tred{I forgot to check $\bar\pi(0)=\bar\pi(f)=0$ in the code.} 
Give functions $\pi$ and $\bar\pi$, face $F \in \Delta\mathcal{P}_B$, and $(u,
v) \in F$ satisfying $\Delta\pi_F(u,v)=0$. Depending on the dimension of the
face $F$, the conditions (1) and (2) of
\autoref{thm:crazy} for having
$\epsilon>0$ are equivalent to the following finitely checkable conditions.

Case 0: $F$ is a vertex (0-dimensional face) of $\Delta\mathcal{P}_B$. We just check if $\bar\pi(u)+\bar\pi(v) = \bar\pi(u+v)$.

Case 1: $F$ is a one-dimensional face of $\Delta\mathcal{P}_B$. Here we consider the case that $F$ is a horizontal edge, $p_1(F) \subseteq I$ and $p_3(F) \subseteq K$ are closed intervals in $\mathcal{P}_B$ and $J = p_2(F) =\{v\}$ is a singleton in $\mathcal{P}_B$. The other cases are similar. We need to check
\begin{enumerate}[\rm(1)]
\item
  $\bar\pi^{\mathrm{pwl}}_I(u)+\bar\pi^{\mathrm{pwl}}(v) =
  \bar\pi^{\mathrm{pwl}}_K(u+v)$
  and 
\item $\bar\pi^{\mathrm{micro}}_I(x) = \bar\pi^{\mathrm{micro}}_K(x+v)$
  for $x \in \intr(p_1(F))$.
\end{enumerate}
The latter condition is equivalent to
\begin{enumerate}[(2a)]
\item[(2a)] for $i \in \{1, \dots, m_I\}$, there is $k \in \{1, \dots, m_K\}$ such that $b^I_i + v - b^K_k \in T$ and $c^I_i = c^K_k$; and
\item[(2b)] for $k \in \{1, \dots, m_K\}$, there is $i \in \{1, \dots, m_I\}$ such that $b^I_i + v - b^K_k \in T$ and $c^I_i = c^K_k$.
\end{enumerate}
Note that it suffices to consider the vertices $(u,v)$ of $F$, since $\Delta\pi_F(u,v)=0$ for some $(u,v) \in\intr(F)$ implies that the same holds for vertices. 

Case 2: $F$ is a two-dimensional face of $\Delta\mathcal{P}_B$. Assume that $p_1(F) \subseteq I, p_2(F) \subseteq J$ and $p_3(F) \subseteq K$, where $I, J, K \in \mathcal{P}_B$. The program checks if $\bar\pi^{\mathrm{pwl}}_I(u)+\bar\pi^{\mathrm{pwl}}_J(v) = \bar\pi^{\mathrm{pwl}}_K(u+v)$, and $\bar\pi^{\mathrm{micro}}_I = \bar\pi^{\mathrm{micro}}_J  = \bar\pi^{\mathrm{micro}}_K \equiv 0$, i.e., $m_I = m_J = m_K =0$. In this case again, it suffices to consider the vertices $(u,v)$ of $F$
satisfying $\Delta\pi_F(u,v)=0$.
\end{remark}

%\begin{remark}
%\tred{Sup $\epsilon$ is attained. So we can talk about the maximum number.}
%\end{remark}
\begin{remark}
\autoref{thm:crazy} generalizes to a more general class of % locally
% quasimicroperiodic
%%%%% -- can't call them quasimicroperiodic because they don't differ by a linear
%%%%% function from the microperiodic!!
functions, in which $\bar\pi^{\mathrm{pwl}}$ is only required to be Lipschitz continuous (but not necessarily affine linear) over $\relint(I)$ for each $I\in \P_{\bar B}$.
\end{remark}

\begin{remark}\label{rem:several-groups}
\autoref{thm:crazy} also generalizes immediately to a more general class of locally quasimicroperiodic functions, in which different dense subgroups $T_I$ of $\R$ are taken in different pieces $\bar\pi^{\mathrm{micro}}_I(x)$, such that the intersection $T_I \cap T_J$ is dense in $\R$ for any intervals $I, J \in \P_{\bar B}$.
\end{remark}
\begin{openquestion}
It is an open question whether the conditions (1) and (2) of
\autoref{thm:crazy} can be checked finitely for the generalized class of \autoref{rem:several-groups}.
%condition (2) in Case 1 of \autoref{rk:implementation_details} 
\end{openquestion}

\section{The example} %\autoref{th:kzh_minimal_has_only_crazy_perturbation_1}
\label{s:proof-th:kzh_minimal_has_only_crazy_perturbation_1}

Consider the piecewise linear function $\pi$ defined by its values and limits
at its breakpoints $0 = x_0 <  x_1 < \dots < x_{17} = l = \frac{219}{800} < x_{18}
= u = \frac{269}{800} <  x_{19} = f - u = \frac{371}{800} < x_{20} = f - l =
\frac{421}{800} < \dots <  x_{37} = f = \frac{4}{5} < \dots < x_{40} = 1$ in
\autoref{tab:kzh_minimal_has_only_crazy_perturbation_1}.  We have made it
available % in the Electronic Compendium, part of the
% software~\cite{infinite-group-relaxation-code}, 
as 
\sage{kzh\_minimal\_has\_only\_crazy\_perturbation\_1}.%
% See its docstring for more details. 

\begin{theorem}\label{th:kzh_minimal_has_only_crazy_perturbation_1}
  The function $\pi$ defined in
  \autoref{tab:kzh_minimal_has_only_crazy_perturbation_1} % =
                                % \sage{kzh\_minimal\_has\_only\_crazy\_perturbation\_1()}$  
  has the following properties.
  \begin{enumerate}[(i)]
  \item[(i)] 
    $\pi$ is a minimal valid function.  
  \item[(ii)] 
    It cannot be written as a convex combination of piecewise continuous minimal
    valid functions. In particular, it cannot be written as a convex combination of
    piecewise linear minimal valid functions.
  \item[(iii)] 
    It is not extreme because it admits effective locally microperiodic
    perturbations.  
    In particular, define a perturbation
    %function 
    as follows.
    \begin{equation}\label{eq:crazy_perturbation_for_kzh_minimal_has_only_crazy_perturbation_1}
      \bar\pi(x)= 
      \begin{cases} 
        1& \text{if } x \in (l, u) \text{ such that } x - l \in \langle t_1, t_2 \rangle_{\Z}  \text{, or} \\
        & \text{if } x \in (f-u, f-l) \text{ such that } x - f +u \in \langle t_1, t_2\rangle_{\Z}; \\
        -1 & \text{if } x \in (l, u) \text{ such that } x - u \in \langle t_1, t_2 \rangle_{\Z}  \text{, or} \\
        & \text{if } x \in (f-u, f-l) \text{ such that } x - f+l \in \langle t_1, t_2\rangle_{\Z}; \\
        0 & \text{otherwise},
      \end{cases}
    \end{equation}
    where $f = \frac{4}{5}$, $l=\frac{219}{800}$, $u=\frac{269}{800}$, $t_1 =
    \frac{77}{7752}\sqrt{2}$, $t_2 = \frac{77}{2584}$; see
    \autoref{fig:has_crazy_perturbation}, magenta.
    Let $\epsilon=0.0003$. Then %$\bar\pi$  is an effective perturbation for
    % $\pi$, namely 
    $\pi\pm \epsilon \bar\pi$ are minimal valid functions% ,
    % showing that $\pi$ is not an extreme function
    . 
  \end{enumerate}
\end{theorem}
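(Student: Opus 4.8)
The plan is to prove the three assertions separately. Parts~(i) and~(iii) reduce to finite, computer-assisted verifications, whereas part~(ii) carries the structural content. For~(i), I would check directly that $\pi$ satisfies the Gomory--Johnson characterization of minimal valid functions recalled in \autoref{s:preliminaries}: $\Z$-periodicity and the values $\pi(0)=0$, $\pi(f)=1$ are immediate from \autoref{tab:kzh_minimal_has_only_crazy_perturbation_1}; the symmetry condition $\pi(x)+\pi(f-x)=1$ reduces to a check on the finitely many breakpoints (using that the listed breakpoints are already symmetric about $f/2=\tfrac{2}{5}$); and subadditivity $\Delta\pi\ge 0$ reduces, by piecewise linearity of $\Delta\pi$ over $\Delta\P_B$, to checking $\Delta\pi_F(u,v)\ge 0$ at the vertices $(u,v)$ of the finitely many faces $F$ of $\Delta\P_B$. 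Since $\pi$ is two-sided discontinuous, one must work with the limit values $\Delta\pi_F$ in place of point values; this is exactly the bookkeeping performed by \sagefunc{extremality_test}.

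For~(iii), I would invoke \autoref{thm:crazy}. First, $\bar\pi$ as defined in \eqref{eq:crazy_perturbation_for_kzh_minimal_has_only_crazy_perturbation_1} belongs to the restricted locally quasimicroperiodic class with $\bar\pi^{\mathrm{pwl}}\equiv 0$ and $\bar\pi^{\mathrm{micro}}=\bar\pi$, supported on the two intervals $(l,u)$ and $(f-u,f-l)$ with the group $T=\langle t_1,t_2\rangle_\Z$, which is dense in $\R$ because $t_1/t_2=\tfrac{\sqrt 2}{3}\notin\Q$; moreover $\bar\pi(0)=\bar\pi(f)=0$ since $0$ and $f$ lie outside those intervals. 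Then verify conditions~(1) and~(2) of \autoref{thm:crazy} by the finite tests of \autoref{rk:implementation_details}, ranging over all faces $F$ of $\Delta\P_B$ and vertices $(u,v)\in F$ with $\Delta\pi_F(u,v)=0$; in particular condition~(1) on the symmetry face $\{x+y=f\}$ amounts to $\bar\pi(x)+\bar\pi(f-x)=0$, which holds because $x\mapsto f-x$ carries the ``$+1$'' coset $l+T$ in $(l,u)$ onto the ``$-1$'' coset $(f-l)+T$ in $(f-u,f-l)$ and conversely. To certify $\epsilon=0.0003$, follow the $\Leftarrow$ direction of the proof of \autoref{thm:crazy}: compute $m=\min\{\,\Delta\pi_F(u,v):(u,v)\in\verts(\Delta\P_B),\ \Delta\pi_F(u,v)\neq 0\,\}>0$ and $M=\sup|\Delta\bar\pi|$ (a small constant, since $\bar\pi$ takes values in $\{-1,0,1\}$), and check $0.0003\le m/M$; this is a finite computation over $\Q(\sqrt 2)$.

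Part~(ii) is the heart of the argument. Suppose, for contradiction, $\pi=\tfrac12(\pi^1+\pi^2)$ with $\pi^1\neq\pi^2$ piecewise continuous minimal valid functions, and set $\tilde\pi=\tfrac12(\pi^1-\pi^2)$, a nonzero effective perturbation (with $\epsilon=1$). First, determine the connected covered components of $\pi$ from the two- and one-dimensional additive faces of $\Delta\P_B$ by \autoref{thm:directly_covered} and \autoref{thm:indirectly_covered}; following the \sagefunc{bhk_irrational} blueprint, every interval of $\P_B$ is covered except $(l,u)$ and $(f-u,f-l)$ (modulo $\Z$). On the covered intervals $\tilde\pi$ is affine, and solving the finite-dimensional linear system of additivity equations among the covered components --- together with the symmetry condition, the values $\tilde\pi(0)=\tilde\pi(f)=0$, and the additivities at breakpoints --- forces $\tilde\pi$ to vanish on every covered interval and at every breakpoint.

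It remains to treat $\tilde\pi$ on $(l,u)$ and $(f-u,f-l)$, which I expect to be the main obstacle. Here I would use the additive faces of $\Delta\P_B$ whose projections meet $(l,u)$: the additive edges connecting $(l,u)$ with its translates $(l,u)+t_1$ and $(l,u)+t_2$ through a translation (\autoref{thm:indirectly_covered}), and the two-dimensional additive faces projecting onto $(l,u)$ (\autoref{rk:perturbation-lim-exist-from-2d-face}), to show that on $(l,u)$ the function $\tilde\pi$ differs from an affine function by one that is constant on each coset of $T$. Since $\pi^1,\pi^2$ are piecewise continuous, $\tilde\pi$ is continuous on $(l,u)$ off a finite set; as $T$ is dense, a $T$-coset-constant summand continuous off a finite set must be globally constant, so $\tilde\pi$ is affine on $(l,u)$, and the covered-component relations at the endpoints $l,u$ (already pinned to $0$) force $\tilde\pi\equiv 0$ on $(l,u)$, hence, via the symmetry reflection, also on $(f-u,f-l)$. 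Thus $\tilde\pi\equiv 0$, a contradiction. The difficulty lies in assembling, for this specific $40$-breakpoint two-sided discontinuous function, the exact inventory of additive faces needed both to pin $\tilde\pi$ to zero on the covered part and to establish the $T$-pseudoperiodicity relations on $(l,u)$ --- a finite but large verification, hence computer-assisted --- and in making rigorous the step from ``continuous off a finite set and pseudoperiodic under a dense group'' to ``affine''; this is precisely where the failure of \autoref{survey-lemma-2.11(v)} is felt, since the very same relations that here only force affineness of a continuous piece admit the genuinely discontinuous perturbation $\bar\pi$ of part~(iii).
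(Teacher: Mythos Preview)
Parts (i) and (iii) of your plan match the paper's: both are finite verifications over $\Q(\sqrt 2)$, carried out by \sage{minimality\_test} and by \autoref{thm:crazy} via \sage{find\_epsilon\_for\_crazy\_perturbation}, respectively.

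For part~(ii) your order of steps differs from the paper's, and the reordering conceals a real gap. The paper does \emph{not} first pin $\tilde\pi$ to zero on the covered part and then treat $(l,u)$; it first shows $\tilde\pi$ is affine on $(l,u)$ (and on $(f-u,f-l)$ by symmetry), and only then sets up and solves a single $43$-parameter linear system (three slopes, including the one on $(l,u)$, plus $40$ jump parameters) to conclude $\tilde\pi\equiv 0$. Your claim that the additivities among the covered components alone already force $\tilde\pi=0$ there and at all breakpoints is not what the computer verification in Appendix~\ref{s:debug-proof} establishes.

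More importantly, your treatment of $(l,u)$ omits the key arithmetic step. The relevant additivities are $\pi(a_i)+\pi(x)=\pi(a_i+x)$ for $i=0,1,2$ with $a_0=x_6$, $a_1=x_{10}$, $a_2=x_{13}$; these are edges connecting $(l,u)$ to the \emph{other uncovered} interval $(f{-}u,f{-}l)$, not to ``translates $(l,u)+t_i$''. Composing them (the reachability argument, \autoref{lemma:equiv1-lemma52-general}) yields $\tilde\pi(\hat x+\lambda_1 t_1+\lambda_2 t_2)-\tilde\pi(\hat x)=\lambda_1 k_1+\lambda_2 k_2$ with $k_i=\tilde\pi(a_i)-\tilde\pi(a_0)$. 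This is \emph{not} yet ``affine plus coset-constant'': for that one needs $k_1/t_1=k_2/t_2$, which the paper derives from the mere boundedness of $\tilde\pi$ via a continued-fraction argument (\autoref{lemma:theta-affine}). Your plan either skips this step or tacitly assumes $k_1=k_2=0$ from the unverified claim of the previous paragraph. Either way, the passage from the reachability relations to affineness on $(l,u)$ is the crux of part~(ii), and your proposal does not supply it.
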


\begin{table}[tp]
  \caption{The piecewise linear function
    \sage{kzh\_minimal\_has\_only\_crazy\_perturbation\_1}, defined by its
    values and limits at the breakpoints.  If a limit is omitted, it equals
    the value.}
  \label{tab:kzh_minimal_has_only_crazy_perturbation_1}
  \centering\small
  \def\arraystretch{1.17}
  \hspace*{-2cm}$%% Automatically generated by tab_kzh_minimal_has_only_crazy_perturbation_1.sage; do not edit.
\begin{array}{c@{\quad}*5c}
  \toprule
  i & x_i & \pi(x_i^-) & \pi(x_i) & \pi(x_i^+) & \text{slope}\\
  \midrule
  0 & 0 & \frac{101}{650} & 0 & \frac{101}{650} & \smash{\raisebox{-1.5ex}{$c_3 = -5$}}\\
  1 & \frac{101}{5000} & \frac{707}{13000} & \frac{2727}{13000} & \frac{707}{13000} & \smash{\raisebox{-1.5ex}{$c_1 = \frac{35}{13}$}}\\
  2 & \frac{60153}{369200} &  & \frac{421071}{959920} &  & \smash{\raisebox{-1.5ex}{$c_3 = -5$}}\\
  3 & \frac{849}{5000} & \frac{4851099}{11999000} & -\frac{1925}{71994} \sqrt{2} + \frac{4851099}{11999000} & \frac{4851099}{11999000} & \smash{\raisebox{-1.5ex}{$c_1 = \frac{35}{13}$}}\\
  4 & \frac{1925}{298129} \sqrt{2} + \frac{849}{5000} &  & \frac{67375}{3875677} \sqrt{2} + \frac{4851099}{11999000} &  & \smash{\raisebox{-1.5ex}{$c_3 = -5$}}\\
  5 & \frac{77}{7752} \sqrt{2} + \frac{849}{5000} & \frac{385}{93016248} \sqrt{2} + \frac{4851099}{11999000} & \frac{2695}{100776} \sqrt{2} + \frac{4851099}{11999000} & \frac{385}{93016248} \sqrt{2} + \frac{4851099}{11999000} & \smash{\raisebox{-1.5ex}{$c_1 = \frac{35}{13}$}}\\
  6 & \llap{$a_0 = {}$}\frac{19}{100} & -\frac{1925}{71994} \sqrt{2} + \frac{275183}{599950} & \frac{18196}{59995} & -\frac{1925}{71994} \sqrt{2} + \frac{275183}{599950} & \smash{\raisebox{-1.5ex}{$c_1 = \frac{35}{13}$}}\\
  7 & \frac{77}{22152} \sqrt{2} + \frac{281986521}{1490645000} &  & -\frac{385}{22152} \sqrt{2} + \frac{10467633}{22933000} &  & \smash{\raisebox{-1.5ex}{$c_3 = -5$}}\\
  8 & \frac{40294}{201875} & \frac{848837}{2099500} & \frac{795836841}{1937838500} & \frac{848837}{2099500} & \smash{\raisebox{-1.5ex}{$c_1 = \frac{35}{13}$}}\\
  9 & \frac{36999}{184600} &  & \frac{975607}{2399800} &  & \smash{\raisebox{-1.5ex}{$c_3 = -5$}}\\
  10 & \llap{$a_1 = {}$}\frac{77}{7752} \sqrt{2} + \frac{19}{100} & -\frac{385}{7752} \sqrt{2} + \frac{275183}{599950} & \frac{385}{93016248} \sqrt{2} + \frac{18196}{59995} & -\frac{385}{7752} \sqrt{2} + \frac{275183}{599950} & \smash{\raisebox{-1.5ex}{$c_3 = -5$}}\\
  11 & \frac{1051}{5000} & \frac{4291761}{11999000} & -\frac{1925}{71994} \sqrt{2} + \frac{4291761}{11999000} & \frac{4291761}{11999000} & \smash{\raisebox{-1.5ex}{$c_1 = \frac{35}{13}$}}\\
  12 & \frac{1925}{298129} \sqrt{2} + \frac{1051}{5000} &  & \frac{67375}{3875677} \sqrt{2} + \frac{4291761}{11999000} &  & \smash{\raisebox{-1.5ex}{$c_3 = -5$}}\\
  13 & \llap{$a_2 = {}$}\frac{14199}{64600} & \frac{192500}{3875677} \sqrt{2} + \frac{240046061}{775135400} & \frac{50943}{167960} & \frac{192500}{3875677} \sqrt{2} + \frac{240046061}{775135400} & \smash{\raisebox{-1.5ex}{$c_3 = -5$}}\\
  14 & \frac{77}{7752} \sqrt{2} + \frac{1051}{5000} & \frac{385}{93016248} \sqrt{2} + \frac{4291761}{11999000} & \frac{2695}{100776} \sqrt{2} + \frac{4291761}{11999000} & \frac{385}{93016248} \sqrt{2} + \frac{4291761}{11999000} & \smash{\raisebox{-1.5ex}{$c_1 = \frac{35}{13}$}}\\
  15 & \frac{77}{22152} \sqrt{2} + \frac{342208579}{1490645000} &  & -\frac{385}{22152} \sqrt{2} + \frac{122181831}{298129000} &  & \smash{\raisebox{-1.5ex}{$c_3 = -5$}}\\
  16 & \frac{193799}{807500} &  & \frac{187742}{524875} &  & \smash{\raisebox{-1.5ex}{$c_1 = \frac{35}{13}$}}\\
  17 & \llap{$l = A = {}$}\frac{219}{800} &  & \frac{933}{2080} & \frac{51443}{147680} & \smash{\raisebox{-1.5ex}{$c_2 = \frac{5}{11999}$}}\\
  18 & \llap{$u = A_0 = {}$}\frac{269}{800} & \frac{668809}{1919840} & \frac{683}{2080} &  & \smash{\raisebox{-1.5ex}{$c_1 = \frac{35}{13}$}}\\
  19 & \llap{$f - u = {}$}\frac{371}{800} &  & \frac{1397}{2080} & \frac{1251031}{1919840} & \smash{\raisebox{-1.5ex}{$c_2 = \frac{5}{11999}$}}\\
  20 & \llap{$f - l = {}$}\frac{421}{800} & \frac{96237}{147680} & \frac{1147}{2080} &  & \smash{\raisebox{-1.5ex}{$c_1 = \frac{35}{13}$}}\\
  21 & \frac{452201}{807500} &  & \frac{337133}{524875} &  & \smash{\raisebox{-1.5ex}{$c_3 = -5$}}\\
  22 & -\frac{77}{22152} \sqrt{2} + \frac{850307421}{1490645000} &  & \frac{385}{22152} \sqrt{2} + \frac{175947169}{298129000} &  & \smash{\raisebox{-1.5ex}{$c_1 = \frac{35}{13}$}}\\
  23 & -\frac{77}{7752} \sqrt{2} + \frac{2949}{5000} & -\frac{385}{93016248} \sqrt{2} + \frac{7707239}{11999000} & -\frac{2695}{100776} \sqrt{2} + \frac{7707239}{11999000} & -\frac{385}{93016248} \sqrt{2} + \frac{7707239}{11999000} & \smash{\raisebox{-1.5ex}{$c_3 = -5$}}\\
  24 & \frac{37481}{64600} & -\frac{192500}{3875677} \sqrt{2} + \frac{535089339}{775135400} & \frac{117017}{167960} & -\frac{192500}{3875677} \sqrt{2} + \frac{535089339}{775135400} & \smash{\raisebox{-1.5ex}{$c_3 = -5$}}\\
  25 & -\frac{1925}{298129} \sqrt{2} + \frac{2949}{5000} &  & -\frac{67375}{3875677} \sqrt{2} + \frac{7707239}{11999000} &  & \smash{\raisebox{-1.5ex}{$c_1 = \frac{35}{13}$}}\\
  26 & \frac{2949}{5000} & \frac{7707239}{11999000} & \frac{1925}{71994} \sqrt{2} + \frac{7707239}{11999000} & \frac{7707239}{11999000} & \smash{\raisebox{-1.5ex}{$c_3 = -5$}}\\
  27 & -\frac{77}{7752} \sqrt{2} + \frac{61}{100} & \frac{385}{7752} \sqrt{2} + \frac{324767}{599950} & -\frac{385}{93016248} \sqrt{2} + \frac{41799}{59995} & \frac{385}{7752} \sqrt{2} + \frac{324767}{599950} & \smash{\raisebox{-1.5ex}{$c_3 = -5$}}\\
  28 & \frac{110681}{184600} &  & \frac{1424193}{2399800} &  & \smash{\raisebox{-1.5ex}{$c_1 = \frac{35}{13}$}}\\
  29 & \frac{121206}{201875} & \frac{1250663}{2099500} & \frac{1142001659}{1937838500} & \frac{1250663}{2099500} & \smash{\raisebox{-1.5ex}{$c_3 = -5$}}\\
  30 & -\frac{77}{22152} \sqrt{2} + \frac{910529479}{1490645000} &  & \frac{385}{22152} \sqrt{2} + \frac{12465367}{22933000} &  & \smash{\raisebox{-1.5ex}{$c_1 = \frac{35}{13}$}}\\
  31 & \frac{61}{100} & \frac{1925}{71994} \sqrt{2} + \frac{324767}{599950} & \frac{41799}{59995} & \frac{1925}{71994} \sqrt{2} + \frac{324767}{599950} & \smash{\raisebox{-1.5ex}{$c_1 = \frac{35}{13}$}}\\
  32 & -\frac{77}{7752} \sqrt{2} + \frac{3151}{5000} & -\frac{385}{93016248} \sqrt{2} + \frac{7147901}{11999000} & -\frac{2695}{100776} \sqrt{2} + \frac{7147901}{11999000} & -\frac{385}{93016248} \sqrt{2} + \frac{7147901}{11999000} & \smash{\raisebox{-1.5ex}{$c_3 = -5$}}\\
  33 & -\frac{1925}{298129} \sqrt{2} + \frac{3151}{5000} &  & -\frac{67375}{3875677} \sqrt{2} + \frac{7147901}{11999000} &  & \smash{\raisebox{-1.5ex}{$c_1 = \frac{35}{13}$}}\\
  34 & \frac{3151}{5000} & \frac{7147901}{11999000} & \frac{1925}{71994} \sqrt{2} + \frac{7147901}{11999000} & \frac{7147901}{11999000} & \smash{\raisebox{-1.5ex}{$c_3 = -5$}}\\
  35 & \frac{235207}{369200} &  & \frac{538849}{959920} &  & \smash{\raisebox{-1.5ex}{$c_1 = \frac{35}{13}$}}\\
  36 & \frac{3899}{5000} & \frac{12293}{13000} & \frac{10273}{13000} & \frac{12293}{13000} & \smash{\raisebox{-1.5ex}{$c_3 = -5$}}\\
  37 & \llap{$f = {}$}\frac{4}{5} & \frac{549}{650} & 1 & \frac{549}{650} & \smash{\raisebox{-1.5ex}{$c_1 = \frac{35}{13}$}}\\
  38 & \frac{4101}{5000} & \frac{899}{1000} & \frac{9667}{13000} & \frac{899}{1000} & \smash{\raisebox{-1.5ex}{$c_3 = -5$}}\\
  39 & \frac{4899}{5000} & \frac{101}{1000} & \frac{3333}{13000} & \frac{101}{1000} & \smash{\raisebox{-1.5ex}{$c_1 = \frac{35}{13}$}}\\
  40 & 1 & \frac{101}{650} & 0 & \frac{101}{650} & \smash{\raisebox{-1.5ex}{$$}}\\
  \bottomrule
\end{array}
%%% Local Variables:
%%% mode: latex
%%% TeX-master: t
%%% End:
$
\end{table}

\begin{proof}
Our proof is computer-assisted.  
The reader may % is invited to
verify it independently. 
\smallskip

\noindent\emph{Part (i)}. 
Verifying minimality is a routine task, following the algorithm of
\cite[Theorem~2.5]{basu-hildebrand-koeppe:equivariant}; see also
\cite[section~5]{hong-koeppe-zhou:software-paper}.  (The algorithm is
equivalent to 
the one described, in the setting of discontinuous pseudo-periodic
superadditive functions, in Richard, Li, and Miller \cite[Theorem
22]{Richard-Li-Miller-2009:Approximate-Liftings}.)
It is implemented in \cite{infinite-group-relaxation-code} as
\sage{minimality\_test}.  
\begin{verbatim}
    sage: h = kzh_minimal_has_only_crazy_perturbation_1()
    sage: minimality_test(h)
    True
\end{verbatim}
We remark that the software uses exact
computations only. For our example, these take place in the field
$\Q(\sqrt2)$; see Appendix~\ref{s:sage-number-fields}.
The minimality test amounts to verifying subadditivity and symmetry on
vertices of the complex $\Delta\P$.  The reader is invited to inspect the
complex $\Delta\P$ by using the optional argument \sage{show\_plots=True} in
the call to \sage{minimality\_test}.

\smallbreak

\noindent\emph{Part (ii)}. %%%% I've changed to \tilde\pi here, so there's no
                           %%%% clash with the function from (iii).
% Proof idea:
% We know that $\pi$ is minimal valid. 
Suppose $\tilde\pi$ is a piecewise continuous perturbation such that $\pi\pm \tilde\pi$ are both minimal valid functions. We will prove that $\tilde\pi \equiv 0$.
 
The first step is to compute the directly and indirectly covered intervals, by applying
\autoref{thm:directly_covered} and \autoref{thm:indirectly_covered} a total of
38 times to various additive faces of~$\Delta\P$.  
This computation is implemented in \sage{generate\_covered\_components\_strategically} %\cite{infinite-group-relaxation-code} 
as a
part of \sage{extremality\_test}.  See 
Appendix~\ref{s:debug-proof} for a protocol of this computation.
Again the reader is invited to use the optional argument
\sage{show\_plots=True} to follow the steps of the proof visually. 
In steps $1$, $2$, $3$, and $6$, two-dimensional additive faces of $\Delta\mathcal{P}$ are
considered via \autoref{thm:directly_covered}, so their projections are
directly covered intervals. In the other 
steps, $4, 5, 7, 8, \dots, 38$, one-dimensional additive faces are considered
via \autoref{thm:indirectly_covered}, and the indirectly covered intervals are
found. 
As a result, all the intervals in $\mathcal{P}$, except for $(l, u)$ and $(f-u, f-l)$ are covered intervals belonging to two connected components. Thus the perturbation $\tilde\pi$ is affine linear with 
two independent slope parameters $\tilde c_1$ and $\tilde c_3$ on these intervals. 

Next, we show that $\tilde\pi$ must be affine linear with some slope~$\tilde
c_2$ on the two remaining
intervals $(l, u)$ and $(f-u, f-l)$ as well, where $l =  x_{17} %=\frac{219}{800}
$, $u = x_{18} 
%= \frac{269}{800}
$ and $f = x_{37} %= \frac{4}{5}
$.  %by applying \autoref{lemma:theta-affine}.
We reuse a lemma regarding ``reachability'' that was used in \cite[section~5]{basu-hildebrand-koeppe:equivariant} to
establish the extremality of the \sagefuncwithgraph{bhk_irrational} function.
To this end, we introduce the following notation.
Let $a_0 = x_6 = \frac{19}{100}$, $a_1 = x_{10}= \frac{77}{7752}\sqrt{2} + \frac{19}{100}$ and $a_2 = x_{13} = \frac{14199}{64600}$. Define $A = l$,  $A_0 = u$, $A_1 = A_0 + a_0 - a_1$ and $A_2 = A_0 + a_0 - a_2$. Let $t_1 = a_1 - a_0 = \frac{77}{7752}\sqrt{2}$, $t_2 = a_2 - a_0 = \frac{101}{5000}$. Then the numbers $a_0, a_1, a_2, t_1, t_2, f,  A, A_0, A_1, A_2 \in (0,1)$ satisfy the condition (i) $t_1, t_2$ are linearly independent over~$\Q$, and also the conditions (ii) and (iii) of \cite[Assumption 5.1]{basu-hildebrand-koeppe:equivariant}.  
Condition (i) implies that the group $\langle t_1, t_2 \rangle_\Z$ is dense in
$\R$. One can verify that for all $x \in (A, A_i)$,  $\pi(a_i)+\pi(x) =
\pi(a_i+x)$ for $i=0,1,2$. Thus, the same additive equations hold for the
perturbation $\tilde\pi$, i.e., for all $x \in (A, A_i)$,
$\tilde\pi(a_i)+\tilde\pi(x) =\tilde\pi(a_i+x)$ for $i=0,1,2$. 
Let % $l'= l+\max\{t_1, t_2\}, u' = u - \max\{t_1, t_2\}$ and let
$\hat x$ be an arbitrary point in the interval %$[l', u']$. 
$[l + t_2, u - t_2]$.
Define $k_1 = \tilde\pi(a_1)-\tilde\pi(a_0)$ and $k_2 =\tilde\pi(a_2)-\tilde\pi(a_0)$. 
By a generalization of \cite[Lemma~5.2]{basu-hildebrand-koeppe:equivariant} (\autoref{lemma:equiv1-lemma52-general}), if $x \in (l, u)$
satisfies that $x - \hat{x} = \lambda_1 t_1+\lambda_2 t_2$ with $\lambda_1,
\lambda_2 \in \Z$, then $\tilde\pi(x) -\tilde\pi(\hat{x}) = \lambda_1 k_1 +
\lambda_2  k_2$. The piecewise continuous perturbation function $\tilde\pi$ is
bounded. An arithmetic argument using continued fractions (\autoref{lemma:theta-affine}) then implies
that $\frac{k_1}{t_1} = \frac{k_2}{t_2}$. 
Denote $s:= \frac{k_1}{t_1} =\frac{k_2}{t_2}$.
Consider $x \in (l, u)$ such that $x - \hat{x} \in \langle t_1, t_2 \rangle_\Z$, i.e., $x - \hat{x} = \lambda_1 t_1+\lambda_2 t_2$ with $\lambda_1, \lambda_2 \in \Z$; then we have
\[
\frac{\tilde{\pi}(x)-\tilde{\pi}(\hat{x})}{x-\hat{x}}=\frac{\lambda_1 k_1 +\lambda_2 k_2}{\lambda_1 t_1 + \lambda_2 t_2} = s.
\]
Therefore, $\tilde{\pi}$ is affine linear with constant slope $s$ over each
coset $\hat x +  \langle t_1, t_2 \rangle_\Z$ within the interval $(l, u)$,
for $\hat x \in [l+t_2, u-t_2]$. 
Since $\langle t_1, t_2 \rangle_\Z$ is dense in $\R$ and the function $\tilde\pi$ is piecewise continuous on $(l,u)$, we conclude that $\tilde\pi$ is affine linear over the interval $(l, u)$  with slope $s$.
The perturbation $\tilde\pi$ is also affine linear on the interval $(f-u, f-l)$ by the symmetry condition.

Now we can set up a ``symbolic'' piecewise linear function $\tilde\pi$ with 43 parameters, representing the
3~slopes of~$\tilde\pi$ and 40 possible jump values at the breakpoints, taking
the symmetry condition into consideration, as in \cite[section
7]{hong-koeppe-zhou:software-paper} and similar to \cite[Theorem~3.2,
Remark~3.6]{basu-hildebrand-koeppe:equivariant}. 
The 43-dimensional homogeneous linear system implied by the additivity constraints has
a full-rank subsystem.  Hence the solution space has dimension 0. 
See again Appendix~\ref{s:debug-proof} for a protocol of this computation, which
shows the 43 equations that give the full-rank system. 

%% \begin{verbatim}
%%     sage: extremality_test(h, crazy_perturbations=False)
%%     True
%% \end{verbatim}
%% \tred{(Updated code in master branch so that it
%%   takes the keyword argument \sage{crazy\_perturbations} (default \sage{True}). 
%%   If two-sided discontinuous and not everything is covered by interval lemma:
%%   If \sage{crazy\_perturbations==False}, all good. 
%%   If \sage{crazy\_perturbations==True}, display a warning.
%%   Then merge this in the crazy branch too.)}

\smallbreak

\noindent\emph{Part (iii)}. 
We use the algorithm of
\autoref{s:algorithm_for_restricted_class_of_crazy_perturbtions}.
It is implemented as \sage{find\_epsilon\_for\_crazy\_perturbation}. 
The function $\bar\pi = \sage{cp}$ is defined in the doctests of
\sage{kzh\_minimal\_has\_only\_crazy\_perturbation\_1}.

\begin{verbatim}
    sage: find_epsilon_for_crazy_perturbation(h, cp)
    0.0003958663221935161?
\end{verbatim}
%\tred{$19/47996$?}

This concludes the proof of the theorem.
\end{proof}

%%\clearpage
\appendix

\section{Exact computations with algebraic field extensions}
\label{s:sage-number-fields}
%\footnote{%Copied and edited from param-abstract as on arXiv
  The software \cite{infinite-group-relaxation-code} is written in SageMath \cite{sage}, a comprehensive
  Python-based open source computer algebra system. 
  By default it works with (arbitrary-precision) rational numbers; but when
  parameters of a function are irrational 
  algebraic numbers, it constructs a suitable number field, embedded into the
  real numbers, and makes exact computations with the elements of this number
  field. 

  These number fields are algebraic field extensions (in the case of the example function discussed in \autoref{s:proof-th:kzh_minimal_has_only_crazy_perturbation_1}, of
  degree $d=2$) of the field~$\Q$ of rational numbers, in much the same way that the
  field~$\mathbb C$ of complex numbers is an algebraic field extension (of degree $d=2$) of
  the field~$\R$ of real numbers.  Elements of the field are represented as a
  rational coordinate vector of dimension~$d$ over the base field~$\Q$, and all arithmetic computations
  are done by manipulating these vectors.  
  The number fields can be considered either abstractly or as embedded subfields 
  of an enclosing field.  When we say that the number fields are embedded into 
  the enclosing field of real numbers, this means in particular that they
  inherit the linear order from the real numbers.  To decide whether $a < b$, one computes
  sufficiently many digits of both numbers using a rigorous version of
  Newton's method; this is guaranteed to 
  terminate because $a = b$  can be decided by just comparing the
  coordinate vectors. 

  The program \cite{infinite-group-relaxation-code} includes a
  function \sagefunc{nice_field_values} that provides convenient access to 
  the standard facilities of SageMath that construct such an
  embedded number field.
%}

%\clearpage
%% \section{An algorithm for a restricted class of locally quasimicroperiodic
%%   perturbations: Implementation details and generalization}
%% \label{s:algorithm_for_restricted_class_of_crazy_perturbtions-appendix}

\clearpage
\section{Arithmetic argument in the proof of
  \autoref{th:kzh_minimal_has_only_crazy_perturbation_1}\,(ii)}
\label{s:arithmetic-argument}

\begin{lemma}[{Generalization of \cite[Lemma 5.2]{basu-hildebrand-koeppe:equivariant}}]
  \label{lemma:equiv1-lemma52-general}
  Using the notations and under the conditions of \cite[Assumption
  5.1]{basu-hildebrand-koeppe:equivariant}, suppose that for all
  $x \in [A, A_i]$, $\tilde \pi(a_i) + \tilde \pi(x) = \tilde \pi(a_i + x)$
  for $i=0,1,2$.  
  Let $\hat x \in [A, A_0]$ such that $\hat x\pm t_i \in [A,A_0]$ for $i=1,2$.
  If
  $x = \hat x + \lambda_1 t_1 + \lambda_2 t_2 \in %(\hat x + \langle t_1, t_2
  % \rangle_\Z) \cap
  [A, A_0]$ with $\lambda_1, \lambda_2 \in \Z$, then
  $$
  \tilde \pi(x) - \tilde \pi(\hat x) = \lambda_1 \left(\tilde \pi({a_1}) - \tilde
    \pi({a_0})\right) + \lambda_2 \left(\tilde \pi({a_2}) - \tilde
    \pi({a_0})\right).
  $$
\end{lemma}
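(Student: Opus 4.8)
The plan is to reduce the statement to a single ``elementary step'' identity and then telescope it along a lattice path that stays inside $[A,A_0]$. First I would record the elementary step: \emph{if $y,\,y+t_i\in[A,A_0]$ for some $i\in\{1,2\}$, then $\tilde\pi(y+t_i)-\tilde\pi(y)=k_i$, where $k_i:=\tilde\pi(a_i)-\tilde\pi(a_0)$.} Since $A_i=A_0-t_i$, the condition $y,\,y+t_i\in[A,A_0]$ is equivalent to $y\in[A,A_i]$. Then the additive relation at index $i$, applied at $x=y$, gives $\tilde\pi(a_i+y)=\tilde\pi(a_i)+\tilde\pi(y)$, while the additive relation at index $0$, applied at $x=y+t_i\in[A,A_0]$, gives $\tilde\pi(a_0+y+t_i)=\tilde\pi(a_0)+\tilde\pi(y+t_i)$. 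As $a_0+y+t_i=a_0+y+(a_i-a_0)=a_i+y$, subtracting the two equations yields $\tilde\pi(y+t_i)-\tilde\pi(y)=\tilde\pi(a_i)-\tilde\pi(a_0)=k_i$.

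Next I would observe that the hypothesis $\hat x\pm t_i\in[A,A_0]$ for $i=1,2$ says precisely that $A+\max(t_1,t_2)\le\hat x\le A_0-\max(t_1,t_2)$, hence $A_0-A\ge 2\max(t_1,t_2)\ge t_1+t_2$. With this in hand I would prove the assertion by induction on $N:=|\lambda_1|+|\lambda_2|$ (well-defined because $t_1,t_2$ are $\Q$-linearly independent, so a point of $\hat x+\langle t_1,t_2\rangle_\Z$ has a unique integer representation). The case $N=0$ is trivial. For $N\ge 1$ it suffices to produce a \emph{reducing step}: an index $i$ with $\lambda_i\ne 0$ such that the point $x'$ obtained by moving one lattice step from $x$ towards $\hat x$ in the $t_i$-direction---that is, $x'=x-t_i$ if $\lambda_i>0$ and $x'=x+t_i$ if $\lambda_i<0$---still lies in $[A,A_0]$. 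Granting such a step, $x'=\hat x+\lambda_1't_1+\lambda_2't_2$ with $|\lambda_1'|+|\lambda_2'|=N-1$, so the induction hypothesis gives $\tilde\pi(x')-\tilde\pi(\hat x)=\lambda_1'k_1+\lambda_2'k_2$; the elementary step applied to the pair $\{x',x\}\subset[A,A_0]$ gives $\tilde\pi(x)-\tilde\pi(x')=\pm k_i$ with the sign equal to that of $\lambda_i$; and adding the two identities produces $\tilde\pi(x)-\tilde\pi(\hat x)=\lambda_1k_1+\lambda_2k_2$.

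The heart of the argument---and the only place the centering hypothesis on $\hat x$ is really used---is that a reducing step always exists when $N\ge 1$. The step for an index $i$ with $\lambda_i>0$ fails only if $x<A+t_i$, and the step for an index $i$ with $\lambda_i<0$ fails only if $x>A_0-t_i$. Suppose all reducing steps fail. If some $\lambda_i>0$ is obstructed, then $x<A+t_i\le\hat x$, so $x-\hat x=\lambda_1t_1+\lambda_2t_2<0$; since $\lambda_i>0$ and $t_1,t_2>0$, the other index $j$ satisfies $\lambda_j<0$, and its obstruction gives $x>A_0-t_j$, whence $A_0-A<t_i+t_j\le 2\max(t_1,t_2)$, contradicting $A_0-A\ge 2\max(t_1,t_2)$. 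Otherwise $\lambda_1,\lambda_2\le 0$ and, since $N\ge 1$, some $\lambda_i<0$; then $x<\hat x$, while the obstruction of that index's step gives $x>A_0-t_i\ge\hat x$, again a contradiction. Hence a reducing step exists, the induction goes through, and the lemma follows. I expect this combinatorial existence claim to be the main obstacle; everything else is the bookkeeping of the telescoping sum, and the argument also shows that the underlying lattice path can be kept inside $[A,A_0]$, which is exactly what breaks down without the hypothesis $\hat x\pm t_i\in[A,A_0]$.
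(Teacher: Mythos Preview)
Your argument is correct. The elementary step is exactly right, the induction on $|\lambda_1|+|\lambda_2|$ is well set up, and the case analysis showing a reducing step always exists is clean; the only place I would tighten the exposition is the ``Otherwise'' clause, where it is worth saying explicitly that under the standing assumption that \emph{all} reducing steps fail, ``no $\lambda_i>0$ is obstructed'' forces ``no $\lambda_i>0$'', since any positive $\lambda_i$ would itself be an (obstructed) reducing step.

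As for comparison with the paper: the paper does not give an independent proof at all. It simply cites \cite[Lemma~5.2]{basu-hildebrand-koeppe:equivariant}, which treats the special center $\hat x=(A+A_0)/2$, and remarks that the same proof works verbatim for any $\hat x$ with $\hat x\pm t_i\in[A,A_0]$. Your write-up is a self-contained version of that referenced argument; the lattice-path/telescoping scheme you use is the natural one and is what the paper is pointing to. So your approach is essentially the same as the paper's (by reference), with the added value that you have made the ``stay inside $[A,A_0]$'' combinatorics explicit.
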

\begin{proof}
  The proof appears in \cite{basu-hildebrand-koeppe:equivariant} for the case
  $\hat x = (A+A_0)/2$; this is called $x_0$ in
  \cite{basu-hildebrand-koeppe:equivariant}. 
  The proof extends verbatim to general $\hat x$ as in our hypothesis.
\end{proof}

\begin{lemma}
\label{lemma:theta-affine}
Let $\theta \colon \R\to \R$ be a bounded function that is piecewise continuous on the interval $(l, u)$.  Denote $\bar{x}:=\frac{l+u}{2}$. Let $t_1, t_2$ be positive numbers that are linearly independent over $\Q$, and let $k_1, k_2 \in \R$. 
Assume that for any $x\in (l,u)$ such that $x - \bar{x} = \lambda_1 t_1+\lambda_2 t_2$ with $\lambda_1, \lambda_2 \in \Z$, we have $\theta(x)-\theta(\bar{x}) = \lambda_1 k_1+\lambda_2 k_2$.
Then, $\frac{k_1}{t_1} = \frac{k_2}{t_2}$. 
\end{lemma}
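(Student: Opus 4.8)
The plan is to argue by contradiction. Suppose $\frac{k_1}{t_1}\neq\frac{k_2}{t_2}$, equivalently $k_1 t_2-k_2 t_1\neq 0$. Since $t_1,t_2>0$ are linearly independent over $\Q$, the ratio $\alpha:=t_1/t_2$ is irrational, so I can invoke the theory of continued fractions: the convergents $p_n/q_n$ of $\alpha$ give integers $q_n\to\infty$ (with $q_n\geq 1$) and integers $p_n$ satisfying $|\alpha q_n-p_n|<1/q_n$. Setting $\lambda_1:=q_n$ and $\lambda_2:=-p_n$, we have $\lambda_1 t_1+\lambda_2 t_2=t_2(\alpha q_n-p_n)$, whose absolute value is less than $t_2/q_n$ and hence tends to $0$. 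Therefore, for all $n$ large enough, the point $x_n:=\bar x+\lambda_1 t_1+\lambda_2 t_2$ has distance from the midpoint $\bar x$ less than $\frac{u-l}{2}$, so $x_n\in(l,u)$; moreover $x_n\neq\bar x$ because $\alpha$ irrational forces $\alpha q_n-p_n\neq 0$.

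Applying the hypothesis of the lemma to $x_n$ gives $\theta(x_n)-\theta(\bar x)=\lambda_1 k_1+\lambda_2 k_2=q_n k_1-p_n k_2$. To estimate this, write $\delta_n:=\alpha q_n-p_n$, so that $p_n=\alpha q_n-\delta_n$ with $|\delta_n|<1/q_n\leq 1$; then
\[
q_n k_1-p_n k_2=q_n(k_1-\alpha k_2)+\delta_n k_2=\frac{q_n}{t_2}\,(k_1 t_2-k_2 t_1)+\delta_n k_2.
\]
The first term grows in absolute value proportionally to $q_n$, since $k_1 t_2-k_2 t_1\neq 0$, while the second term is bounded by $|k_2|$. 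Hence $|\theta(x_n)-\theta(\bar x)|=|q_n k_1-p_n k_2|\to\infty$ as $n\to\infty$, contradicting the boundedness of $\theta$, which forces $|\theta(x_n)-\theta(\bar x)|\leq 2\sup_{\R}|\theta|<\infty$. Thus $k_1 t_2-k_2 t_1=0$, i.e.\ $\frac{k_1}{t_1}=\frac{k_2}{t_2}$.

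I do not anticipate a serious obstacle here: this is a standard Diophantine-approximation argument, and the only points requiring care are (a) ensuring the approximating points $x_n$ stay inside $(l,u)$ so that the hypothesis applies — handled by using convergents, which make $\lambda_1 t_1+\lambda_2 t_2$ arbitrarily small — and (b) cleanly separating the term linear in $q_n$ from the bounded error in the expression $q_n k_1-p_n k_2$. I note that piecewise continuity of $\theta$ on $(l,u)$ plays no role in this lemma; only the boundedness of $\theta$ is used.
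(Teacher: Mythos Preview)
Your proof is correct and follows essentially the same continued-fraction argument as the paper's: both take $(\lambda_1,\lambda_2)=(q_n,-p_n)$ from the convergents of $t_1/t_2$, so that $\lambda_1 t_1+\lambda_2 t_2$ is small enough to keep $x_n\in(l,u)$, and then show $\lambda_1 k_1+\lambda_2 k_2$ is unbounded in $n$, contradicting the boundedness of $\theta$. Your observation that piecewise continuity is unused is also accurate.
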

\begin{proof}
Suppose for the sake of contradiction that $k_1 t_2 = k_2 t_1 +\sigma$ where $\sigma \neq 0$. Let $U \in \R$ such that $|\theta(x)|\leq U$ for any $x$. Let $N$ be an integer such that $N > (|k_2| (u-l)/2+2 U t_2)/|\sigma|$. 
%\begin{claim}
%Let $t_1, t_2 \in \R$ be linearly independent over $\Q$. 
%For any $\epsilon >0$ and $n \in \N$, 
%there exist $\lambda_1, \lambda_2 \in \Z$, $\lambda_1 > n$ 
%such that $\lvert\lambda_1 t_1 +\lambda_2 t_2\rvert < \epsilon$. 
%\end{claim}
Since $t_1$ and $t_2$ are linearly independent over~$\Q$, $t_1/t_2$ is irrational. 
The continued fraction approximations %(see for example Theorem 164 in Chapter X of ``An Introduction to the Theory of Numbers" by Hardy and Wright)
for $t_1/t_2$ form an infinite sequence $\{p_n/q_n\}_{n\in\N}$ of successive convergents  with the property that $|t_1/t_2 - p_n/q_n| \leq 1/(q_n q_{n+1})$.
Let $(\lambda_1,\lambda_2) = (q_n, -p_n)$ for some large enough index $n$, then $\lambda_1, \lambda_2 \in \Z$ satisfy that $\lambda_1 > N$ and $|\lambda_1 t_1 +\lambda_2 t_2| < (u - l)/2$.
Let $x = \bar{x} + \lambda_1 t_1+\lambda_2 t_2$. Then $x \in (l, u)$, and hence $\theta(x)-\theta(\bar{x}) = \lambda_1 k_1+\lambda_2 k_2$.
We have on the one hand, $|\lambda_1 k_1 + \lambda_2 k_2| \leq 2U$. On the other hand,
  \begin{align*}
    |\lambda_1 k_1 t_2 + \lambda_2 k_2 t_2| &= |\lambda_1 (k_2 t_1 +\sigma) +
    \lambda_2 k_2 t_2| \\ &= |k_2(\lambda_1 t_1 + \lambda_2 t_2) +\lambda_1
    \sigma| \geq \bigl\lvert\lvert\lambda_1\sigma\rvert- \lvert k_2(\lambda_1
    t_1 + \lambda_2 t_2)\rvert\bigr\rvert.
  \end{align*}
Since $|\lambda_1\sigma| > N|\sigma| > |k_2|(u-l)/2+2U t_2$ and $|\lambda_1 t_1 +\lambda_2 t_2| < (u-l)/2$,  we have $|\lambda_1 k_1 t_2 + \lambda_2 k_2 t_2| > 2U t_2$. By dividing both sides by $t_2 >0$, we obtain $|\lambda_1 k_1 + \lambda_2 k_2| > 2U$, a contradiction. Therefore, $k_1 t_2 = k_2 t_1$. 
\end{proof}

\clearpage
\section{Protocol of the automatic proof}
\label{s:debug-proof}

The following is a protocol of the automatic extremality test implemented in
\cite{infinite-group-relaxation-code}.  The protocol provides the details for
the proof of \autoref{th:kzh_minimal_has_only_crazy_perturbation_1} (ii).  We
remark that by invoking \sage{extremality\_test} with the optional argument
\sage{crazy\_perturbations=False}, the code is asked to test extremality
relative to the space of piecewise continuous functions, which is why it
computes \sage{True}.
\lstinputlisting[breaklines]{auto_proof_crazy_example.txt}%

\section*{Acknowledgment} 
The authors wish to express their gratitude to the anonymous referees, whose
comments led to a much improved paper.

{\small
\providecommand\ISBN{ISBN }
\bibliographystyle{../amsabbrvurl}
\bibliography{../bib/MLFCB_bib}
}
\end{document}
%%% Local Variables:
%%% mode: latex
%%% TeX-master: t
%%% End: